\newtheorem{theorem}{Theorem}[section]
\newtheorem{corollary}[theorem]{Corollary}
\newtheorem{lemma}[theorem]{Lemma}
\newtheorem{proposition}[theorem]{Proposition}
\newtheorem{example}[theorem]{Example}
\newtheorem{remark}[theorem]{Remark}
\numberwithin{equation}{section}
\DeclareMathOperator{\co}{co}
\DeclareMathOperator{\Des}{Des}
\newcommand{\SSSS}{\mathfrak S}
\newcommand{\cupdots}{\cup \cdots \cup}
\DeclareMathOperator{\carries}{carries}
\DeclareMathOperator{\Orb}{Orb}
\DeclareMathOperator{\Comp}{Comp}
\DeclareMathOperator{\NCP}{NCP}
\tikzstyle{internal node}=[circle,draw,inner sep=1.5,fill=black]
\newcommand{\TablePisTwo}
{
\begin{table}

\[\def\arraystretch{1.6}
\begin{array}{|r|r|r|r|r|r|r|r|r|r|r|r|}
  \hline
    \mathbf{n}		& \mathbf{n}_2  & d_2(\mathbf{n}) & r=1	& r=2 	& r=3 	& r=4	& r=5	& r=6    & r=7  & r=8    & r=9   	\\
  \hline\hline

	\mathbf{2}		& 10		& 0		& {\color{red}1},1		& {\color{red}1},1		& {\color{red}1},1		& {\color{red}1},1		& {\color{red}1},1		& {\color{red}1},1		& {\color{red}1},1		& {\color{red}1},1		& {\color{red}1},1		\\
	\mathbf{3}		& 11		& 1		& {\color{blue}1},1		& {\color{blue}1},1		& {\color{orange}1},1		& {\color{blue}1},1		& {\color{orange}1},1		& {\color{orange}1},1		& {\color{orange}1},1		& {\color{blue}1},1		& {\color{orange}1},1		\\
	\mathbf{4}		& 100		& 0		& {\color{red}3},3		& {\color{red}3},3		& {\color{red}3},3		& {\color{red}3},3		& {\color{red}3},3		& {\color{red}3},3		& {\color{red}3},3		& {\color{red}3},3		& {\color{red}3},3		\\
	\mathbf{5}		& 101		& 1		& {\color{blue}3},3		& {\color{blue}3},6		& {\color{blue}2},2		& {\color{blue}3},3		& {\color{blue}2},2		& {\color{green!60!black}3},3		& {\color{orange}2},2		& {\color{blue}3},3		& {\color{blue}2},2		\\
	\mathbf{6}		& 110		& 1		& {\color{blue}4},4		& {\color{blue}4},4		& {\color{blue}3},3		& {\color{blue}4},4		& {\color{blue}3},3		& {\color{blue}3},5		& {\color{orange}3},3		& {\color{blue}4},4		& {\color{blue}3},3		\\
	\mathbf{7}		& 111		& 2		& {\color{blue}4},4		& {\color{blue}3},3		& {\color{blue}1},2		& {\color{orange}2},5		& {\color{orange}2},2		& {\color{green!60!black}3},4		& {\color{orange}2},2		& {\color{green!60!black}4},5		& {\color{orange}2},2		\\
	\mathbf{8}		& 1000		& 0		& {\color{red}7},7		& {\color{red}7},7		& {\color{red}7},7		& {\color{red}7},7		& {\color{red}7},7		& {\color{red}7},7		& {\color{red}7},7		& {\color{red}7},7		& {\color{red}7},7		\\
	\mathbf{9}		& 1001		& 1		& {\color{blue}7},7		& {\color{blue}7},7		& {\color{blue}6},7		& {\color{blue}7},8		& {\color{blue}6},6		& {\color{blue}6},6		& {\color{blue}5},7		& {\color{blue}7},7		& {\color{blue}6},6		\\
	\mathbf{10}		& 1010		& 1		& {\color{blue}8},8		& {\color{blue}8},8		& {\color{blue}7},7		& {\color{blue}8},8		& {\color{blue}7},7		& {\color{blue}7},8		& {\color{blue}6},10		& {\color{blue}8},8		& {\color{blue}7},7		\\
	\mathbf{11}		& 1011		& 2		& {\color{blue}8},8		& {\color{blue}7},8		& {\color{blue}5},5		& {\color{blue}5},6		& {\color{blue}3},6		& {\color{green!60!black}3},5		& {\color{orange}3},5		& {\color{green!60!black}4},7		& {\color{green!60!black}4},9		\\
	\mathbf{12}		& 1100		& 1		& {\color{blue}10},10		& {\color{blue}10},10		& {\color{blue}9},9		& {\color{blue}10},10		& {\color{blue}9},9		& {\color{blue}9},11		& {\color{blue}8},12		& {\color{blue}10},10		& {\color{blue}9},9		\\
	\mathbf{13}		& 1101		& 2		& {\color{blue}10},10		& {\color{blue}9},9		& {\color{blue}7},7		& {\color{blue}7},10		& {\color{blue}5},7		& {\color{blue}4},7		& {\color{orange}3},6		& {\color{green!60!black}4},11		& {\color{green!60!black}4},10		\\
	\mathbf{14}		& 1110		& 2		& {\color{blue}11},11		& {\color{blue}10},11		& {\color{blue}8},10		& {\color{blue}8},13		& {\color{blue}6},8		& {\color{blue}5},9		& {\color{blue}3},7		& {\color{blue}4},11		& {\color{green!60!black}4},9		\\
	\mathbf{15}		& 1111		& 3		& {\color{blue}11},11		& {\color{blue}9},9		& {\color{blue}6},7		& {\color{blue}5},8		& {\color{green!60!black}3},7		& {\color{green!60!black}3},8		& {\color{orange}3},6		& {\color{green!60!black}4},8		& {\color{green!60!black}4},8		\\
	\mathbf{16}		& 10000		& 0		& {\color{red}15},15		& {\color{red}15},15		& {\color{red}15},15		& {\color{red}15},15		& {\color{red}15},15		& {\color{red}15},15		& {\color{red}15},15		& {\color{red}15},15		& {\color{red}15},15		\\
	\mathbf{17}		& 10001		& 1		& {\color{blue}15},15		& {\color{blue}15},15		& {\color{blue}14},15		& {\color{blue}15},17		& {\color{blue}14},14		& {\color{blue}14},14		& {\color{blue}13},14		& {\color{blue}15},15		& {\color{blue}14},14		\\
	\mathbf{18}		& 10010		& 1		& {\color{blue}16},16		& {\color{blue}16},16		& {\color{blue}15},15		& {\color{blue}16},16		& {\color{blue}15},15		& {\color{blue}15},17		& {\color{blue}14},15		& {\color{blue}16},16		& {\color{blue}15},15		\\
	\mathbf{19}		& 10011		& 2		& {\color{blue}16},16		& {\color{blue}15},15		& {\color{blue}13},14		& {\color{blue}13},13		& {\color{blue}11},12		& {\color{blue}10},16		& {\color{blue}8},11		& {\color{blue}9},14		& {\color{blue}7},13		\\
	\mathbf{20}		& 10100		& 1		& {\color{blue}18},18		& {\color{blue}18},18		& {\color{blue}17},17		& {\color{blue}18},18		& {\color{blue}17},17		& {\color{blue}17},19		& {\color{blue}16},18		& {\color{blue}18},18		& {\color{blue}17},17		\\
\hline
\end{array}
\]
\caption{A comparison of our best prediction of the number 
of factors of $2$ in $A_{n}^{r}$ with the actual number.
Predictions are given first, colored according to whether
the result is given by
{\color{orange} Proposition~\ref{proposition_Euler_p}},
{\color{green!60!black} Proposition~\ref{proposition_Euler_2}}, 
{\color{red} Theorem~\ref{theorem_even_prime}},  or
{\color{blue} Theorem~\ref{theorem_better_bound_2}},
and the actual value is given second.}
\label{table_p_2}
\end{table}
}
\newcommand{\TablePisThree}
{
\begin{table}

\[\def\arraystretch{1.6}
\begin{array}{|r|r|r|r|r|r|r|r|r|r|r|r|}
  \hline
    \mathbf{n}		& \mathbf{n}_3  & d_3(\mathbf{n}) & r=1	& r=2 	& r=3 	& r=4	& r=5	& r=6    & r=7  & r=8  & r=9   	\\
  \hline\hline

	\mathbf{2}		& 2		& 1		& {\color{blue}0},0		& {\color{blue}0},0		& {\color{blue}0},0		& {\color{blue}0},0		& {\color{green!60!black}0},0		& {\color{blue}0},0		& {\color{green!60!black}0},0		& {\color{blue}0},0		& {\color{blue}0},0		\\
	\mathbf{3}		& 10		& 0		& {\color{green!60!black}1},1		& {\color{blue}0},0		& {\color{green!60!black}2},2		& {\color{blue}0},0		& {\color{red}1},1		& {\color{blue}0},0		& {\color{red}1},1		& {\color{blue}0},0		& {\color{green!60!black}3},3		\\
	\mathbf{4}		& 11		& 1		& {\color{blue}1},1		& {\color{blue}0},0		& {\color{blue}1},2		& {\color{blue}0},0		& {\color{orange}1},1		& {\color{blue}0},0		& {\color{orange}1},1		& {\color{blue}0},0		& {\color{orange}2},3		\\
	\mathbf{5}		& 12		& 2		& {\color{blue}1},1		& {\color{blue}0},0		& {\color{orange}1},1		& {\color{blue}0},0		& {\color{orange}1},1		& {\color{blue}0},0		& {\color{orange}1},2		& {\color{blue}0},0		& {\color{orange}1},1		\\
	\mathbf{6}		& 20		& 1		& {\color{blue}2},2		& {\color{blue}0},0		& {\color{blue}2},4		& {\color{blue}0},0		& {\color{blue}1},2		& {\color{blue}0},0		& {\color{brown}2},2		& {\color{blue}0},0		& {\color{blue}2},5		\\
	\mathbf{7}		& 21		& 2		& {\color{blue}2},2		& {\color{blue}0},0		& {\color{blue}1},2		& {\color{blue}0},0		& {\color{orange}1},1		& {\color{blue}0},0		& {\color{orange}1},1		& {\color{blue}0},0		& {\color{orange}2},3		\\
	\mathbf{8}		& 22		& 3		& {\color{blue}2},2		& {\color{blue}0},0		& {\color{orange}1},2		& {\color{blue}0},0		& {\color{orange}1},2		& {\color{blue}0},0		& {\color{brown}2},2		& {\color{blue}0},0		& {\color{orange}2},2		\\
	\mathbf{9}		& 100		& 0		& {\color{green!60!black}4},4		& {\color{blue}0},0		& {\color{green!60!black}5},6		& {\color{blue}0},0		& {\color{red}4},4		& {\color{blue}0},0		& {\color{red}4},4		& {\color{blue}0},0		& {\color{green!60!black}6},7		\\
	\mathbf{10}		& 101		& 1		& {\color{blue}4},4		& {\color{blue}0},0		& {\color{blue}4},6		& {\color{blue}0},0		& {\color{blue}3},4		& {\color{blue}0},0		& {\color{blue}3},4		& {\color{blue}0},0		& {\color{blue}4},6		\\
	\mathbf{11}		& 102		& 2		& {\color{blue}4},4		& {\color{blue}0},0		& {\color{blue}3},3		& {\color{blue}0},0		& {\color{blue}1},2		& {\color{blue}0},0		& {\color{brown}2},2		& {\color{blue}0},0		& {\color{orange}2},4		\\
	\mathbf{12}		& 110		& 1		& {\color{blue}5},5		& {\color{blue}0},0		& {\color{blue}5},6		& {\color{blue}0},0		& {\color{blue}4},5		& {\color{blue}0},0		& {\color{blue}4},5		& {\color{blue}0},0		& {\color{blue}5},7		\\
	\mathbf{13}		& 111		& 2		& {\color{blue}5},5		& {\color{blue}0},0		& {\color{blue}4},4		& {\color{blue}0},0		& {\color{blue}2},3		& {\color{blue}0},0		& {\color{brown}2},3		& {\color{blue}0},0		& {\color{brown}3},5		\\
	\mathbf{14}		& 112		& 3		& {\color{blue}5},5		& {\color{blue}0},1		& {\color{blue}3},4		& {\color{purple}1},2		& {\color{orange}1},2		& {\color{purple}1},1		& {\color{brown}2},2		& {\color{purple}1},1		& {\color{orange}2},5		\\
	\mathbf{15}		& 120		& 2		& {\color{blue}6},6		& {\color{blue}0},0		& {\color{blue}5},5		& {\color{blue}0},0		& {\color{blue}3},5		& {\color{blue}0},0		& {\color{blue}2},5		& {\color{blue}0},0		& {\color{brown}3},6		\\
	\mathbf{16}		& 121		& 3		& {\color{blue}6},6		& {\color{purple}1},1		& {\color{blue}4},5		& {\color{purple}1},1		& {\color{blue}1},3		& {\color{purple}1},1		& {\color{brown}2},3		& {\color{purple}1},1		& {\color{brown}3},7		\\
	\mathbf{17}		& 122		& 4		& {\color{blue}6},6		& {\color{blue}0},0		& {\color{blue}3},4		& {\color{blue}0},0		& {\color{orange}1},2		& {\color{blue}0},0		& {\color{brown}2},2		& {\color{blue}0},0		& {\color{orange}2},3		\\
	\mathbf{18}		& 200		& 1		& {\color{blue}8},8		& {\color{blue}0},0		& {\color{blue}8},11		& {\color{blue}0},0		& {\color{blue}7},8		& {\color{blue}0},0		& {\color{blue}7},8		& {\color{blue}0},0		& {\color{blue}8},12		\\
	\mathbf{19}		& 201		& 2		& {\color{blue}8},8		& {\color{blue}0},0		& {\color{blue}7},8		& {\color{blue}0},0		& {\color{blue}5},7		& {\color{blue}0},0		& {\color{blue}4},7		& {\color{blue}0},0		& {\color{blue}4},10		\\
	\mathbf{20}		& 202		& 3		& {\color{blue}8},8		& {\color{blue}0},0		& {\color{blue}6},8		& {\color{blue}0},0		& {\color{blue}3},5		& {\color{blue}0},0		& {\color{brown}2},6		& {\color{blue}0},0		& {\color{brown}3},7		\\
\hline
\end{array}
\]
\caption{A comparison of our best prediction of the number 
of factors of $3$ in $A_{n}^{r}$ with the actual number.
Predictions are given first, colored according to whether
the result is given by
{\color{orange} Proposition~\ref{proposition_Euler_p}},
{\color{brown} Example~\ref{example_recursion_3}},
{\color{purple} Example~\ref{example_3_14}},
{\color{red} Theorem~\ref{theorem_odd_prime}},
{\color{green!60!black} Proposition~\ref{proposition_simple_shift}},  or
{\color{blue} Theorem~\ref{theorem_group_odd_p}},
and the actual value is given second.}
\label{table_p_3}
\end{table}
}
\begin{document}

\author{Richard Ehrenborg and Alex Happ}
\title{On the powers of the descent set statistic}
\date{}

\maketitle

\begin{abstract}
We study the sum of the $r$th powers of the descent set statistic
and how many small prime factors occur in these numbers.
Our results depend upon the base $p$ expansion of $n$ and $r$.
\end{abstract}

\section{Introduction}

It has always been interesting to study
divisibility properties of sequences defined 
combinatorially.
Three classical examples are
Fibonacci numbers,
the partition function,
and
binomial coefficients.
The Fibonacci numbers satisfy
$\gcd(F_{m},F_{n}) = F_{\gcd(m,n)}$.
Ramanujan discovered that
the partition function satisfies, among other relations,
that $5$ divides $p(5n+4)$.
The binomial coefficients 
are well-studied modulo a prime;
see the theorems of Lucas and Kummer
in Section~\ref{section_preliminaries}.
In this paper we consider divisibility properties
of the sum of powers of the descent set statistic
from permutation enumeration.
The descent set statistic was first studied by
MacMahon~\cite{MacMahon}.

For a permutation $\pi$ in the symmetric group~$\SSSS_{n}$,
the descent set
of $\pi$ is the subset of $[n-1] = \{1,2, \ldots, n-1\}$
given by
$\Des(\pi) = \{i \in [n-1] \: : \: \pi_{i} > \pi_{i+1}\}$.
The descent set statistics~$\beta_{n}(S)$
are defined for subsets $S$ of $[n-1]$ by
$$  \beta_{n}(S)
   =
      \left|
        \{ \pi \in \SSSS_{n} \: : \: \Des(\pi) = S \}
      \right|   .   $$
Since there are $n!$ permutations, we directly have
$$  n! = \sum_{S \subseteq [n-1]} \beta_{n}(S)  . $$      
Define $A^{r}_{n}$ to be the sum of the $r$th powers of the descent
set statistics, that is,
$$  A^{r}_{n} = \sum_{S \subseteq [n-1]} \beta_{n}(S)^{r}  . $$
This quantity occurs naturally
as moments of the random variable
$\Des(S)$, where the set $S$ is chosen with a uniform
distribution from all subsets of the set $[n-1]$.

In Section~\ref{section_divisibility}
we give two expressions,
depending on the parity
of $r$ for $A^{r}_{n}$;
see Lemma~\ref{lemma_expansion}.
We continue by showing that for an odd prime $p$ and
an even positive integer $r$,
if $m$ and $n$ contain the same non-zero digits in base $p$,
then
the prime $p$ dividing $A_{m}^{r}$ is equivalent to
$p$ dividing $A_{n}^{r}$.
In Section~\ref{section_number_of_prime_factors}
we give lower bounds for the number of prime factors
in $A^{r}_{n}$. These bounds depend on
the digit sum of $n$ in base $p$.
Unfortunately, we do not obtain any bound when $p$ is
an odd prime and $r$ is even.
In Section~\ref{section_improving_the_bound}
we sharpen the results by
collecting terms together
occurring in the expansion
of Lemma~\ref{lemma_expansion}.
The method of collection is by considering
orbits of a group action.
First we use the cyclic group ${\mathbb Z}_{p^{k}}$,
and then we use a group defined by 
the action on the balanced $p$-ary tree of cyclically
rotating the branches under any node.
The lower bounds obtained in this section for the prime factors of $p$
in $A^{r}_{n}$ now also depend on the base $p$ expansion of $r$.

We end in the concluding remarks by presenting
two tables obtained by computation to compare our
bounds with the actual number of factors of $2$ and $3$
occurring in $A^{r}_{n}$.

\section{Preliminaries}
\label{section_preliminaries}
      
Define $\alpha_{n}(S)$ by the sum
$$
\alpha_{n}(S) = \sum_{T \subseteq S} \beta_{n}(T) .
$$      
Observe that $\alpha_{n}(S)$ enumerates the number of permutations
in $\SSSS_{n}$ with descent set contained in the set $S$.
Especially, we know that
$A^{1}_{n} = \alpha_{n}([n-1]) = n!$.
For more on descents; see~\cite[Section~1.4]{EC1}.

Define a bijection $\co$
from subsets of the set $[n-1]$
to compositions of $n$ by sending the set
$S = \{s_{1} < s_{2} < \cdots < s_{k-1}\}$
to the composition
$\co(S) = (c_{1}, c_{2}, \ldots, c_{k})$, where 
$c_{i} = s_{i} - s_{i-1}$ with $s_{0} = 0$ and $s_{k} = n$.
See, for instance,~\cite{C_E_P_R}
or~\cite[Section~7.19]{EC2}.
It is now straightfoward to observe that
$\alpha_{n}(S)$ is given by the multinomial coefficient
$\binom{n}{\co(S)}$.

Using elementary number theory we have three observations.
 \begin{proposition}
Let $p$ be a prime.
Assume that $r$ and $s$ are both greater than or equal to $k$ and
$r \equiv s \bmod p^{k-1} \cdot (p-1)$.
Then the congruence
$A^{r}_{n} \equiv A^{s}_{n} \bmod p^{k}$ holds.
Especially, the statement $p^{k}$ divides~$A^{r}_{n}$ 
is equivalent to $p^{k}$ divides $A^{s}_{n}$.
\label{proposition_Euler_p}
\end{proposition}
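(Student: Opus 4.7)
The plan is to prove the congruence termwise in the defining sum $A^{r}_{n} = \sum_{S \subseteq [n-1]} \beta_{n}(S)^{r}$ by applying Euler's theorem to each $\beta_{n}(S)^{r}$, separating those $S$ for which $p$ divides $\beta_{n}(S)$ from those for which it does not. The hypothesis $r \equiv s \bmod p^{k-1}(p-1) = \varphi(p^{k})$ together with $r,s \geq k$ is exactly tailored so that, in both regimes, the $r$th and $s$th powers of a fixed integer agree modulo $p^{k}$.

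More precisely, first I would fix $S \subseteq [n-1]$ and set $a = \beta_{n}(S)$. If $p \nmid a$, then $\gcd(a,p^{k}) = 1$, and Euler's theorem gives $a^{\varphi(p^{k})} \equiv 1 \bmod p^{k}$. Since $r \equiv s \bmod \varphi(p^{k})$ and both $r,s$ are nonnegative, one obtains $a^{r} \equiv a^{s} \bmod p^{k}$ directly. If instead $p \mid a$, write $a = p \cdot m$; then $a^{r} = p^{r} m^{r}$, and the assumption $r \geq k$ yields $p^{k} \mid p^{r} \mid a^{r}$, so $a^{r} \equiv 0 \bmod p^{k}$. The same argument with $s$ in place of $r$ gives $a^{s} \equiv 0 \bmod p^{k}$, and hence again $a^{r} \equiv a^{s} \bmod p^{k}$.

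Summing these termwise congruences over all $S \subseteq [n-1]$ yields $A^{r}_{n} \equiv A^{s}_{n} \bmod p^{k}$, from which the final equivalence ``$p^{k}$ divides $A^{r}_{n}$ iff $p^{k}$ divides $A^{s}_{n}$'' is immediate. There is no real obstacle here: the proposition is essentially a direct application of Euler's theorem, with the mild subtlety being the need to handle $\beta_{n}(S)$ divisible by $p$ separately, and this is exactly why the hypothesis $r,s \geq k$ (rather than merely $r,s \geq 1$) is imposed.
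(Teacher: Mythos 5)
Your proof is correct and follows essentially the same route as the paper's: apply Euler's theorem termwise to each $\beta_{n}(S)^{r}$ when $p \nmid \beta_{n}(S)$, use $r,s \geq k$ to kill both powers modulo $p^{k}$ when $p \mid \beta_{n}(S)$, and sum over all $S$. The only cosmetic difference is that the paper writes $s - r = p^{k-1}(p-1)\cdot j$ explicitly before invoking Euler's theorem, which your phrasing subsumes.
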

\begin{proof}
We may assume that $r < s$,
that is, $s-r = p^{k-1} \cdot (p-1) \cdot j$ for a positive integer $j$.
For an integer $x$ which is relative prime to the prime $p$,
Euler's theorem implies that
$x^{s} \equiv x^{r} \cdot (x^{p^{k-1} (p-1)})^{j} \equiv x^{r} \bmod p^{k}$.
For an integer $x$ which is divisible by the prime $p$,
we have
$x^{s} \equiv 0 \equiv x^{r} \bmod p^{k}$
since $r,s \geq k$.
Thus
for all integers $x$ we have
$x^{s} \equiv x^{r} \bmod p^{k}$
and 
we conclude
$A^{s}_{n}
\equiv 
\sum_{S \subseteq [n-1]} \beta_{n}(S)^{s}
\equiv 
\sum_{S \subseteq [n-1]} \beta_{n}(S)^{r}
\equiv 
A^{r}_{n} \bmod p^{k}$.
\end{proof}

When the prime $p$ is $2$ and $k \geq 3$, we have an improvement
of a factor of $2$.
\begin{proposition}
Assume that $r$ and $s$ are both greater than or equal to $k \geq 3$ and
$r \equiv s \bmod 2^{k-2}$.
Then the congruence
$A^{r}_{n} \equiv A^{s}_{n} \bmod 2^{k}$ holds.
Especially, the statement $2^{k}$ divides~$A^{r}_{n}$ 
is equivalent to $2^{k}$ divides $A^{s}_{n}$.
\label{proposition_Euler_2}
\end{proposition}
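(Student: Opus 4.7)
The plan is to follow the proof of Proposition~\ref{proposition_Euler_p} verbatim, replacing Euler's theorem with the stronger assertion that every odd integer $x$ satisfies
\[ x^{2^{k-2}} \equiv 1 \pmod{2^{k}} \quad \text{for all } k \geq 3. \]
This reflects the classical structure theorem $(\Zzz/2^{k}\Zzz)^{*} \cong \Zzz/2\Zzz \times \Zzz/2^{k-2}\Zzz$ for $k \geq 3$; the exponent of this group is $2^{k-2}$, which is exactly half of $\phi(2^{k}) = 2^{k-1}$. This is the source of the factor-of-$2$ improvement stated in the proposition, and it also explains the hypothesis $k \geq 3$, since $(\Zzz/4\Zzz)^{*}$ is cyclic of order $2$ and no improvement would result for $k = 2$.

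First I would establish the displayed congruence by induction on $k$. The base case $k = 3$ is the familiar fact that every odd square is congruent to $1$ modulo $8$, verified by checking $1^{2}, 3^{2}, 5^{2}, 7^{2}$ directly. For the inductive step, if $x^{2^{k-2}} = 1 + 2^{k} m$ for some integer $m$, then squaring yields $x^{2^{k-1}} = 1 + 2^{k+1} m + 2^{2k} m^{2} \equiv 1 \pmod{2^{k+1}}$, advancing the induction.

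With this lemma in hand, the remainder of the proof transcribes the argument for Proposition~\ref{proposition_Euler_p}. Assuming $r \leq s$, write $s - r = 2^{k-2} \cdot j$ for a non-negative integer $j$. If $x$ is odd, then $x^{s} = x^{r} \cdot (x^{2^{k-2}})^{j} \equiv x^{r} \pmod{2^{k}}$. If $x$ is even, then $x^{r}$ and $x^{s}$ are both congruent to $0$ modulo $2^{k}$ since $r, s \geq k$. Hence $x^{s} \equiv x^{r} \pmod{2^{k}}$ for every integer $x$, and summing over the values $x = \beta_{n}(S)$ for $S \subseteq [n-1]$ yields $A^{s}_{n} \equiv A^{r}_{n} \pmod{2^{k}}$.

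There is no real obstacle here; the argument is a direct adaptation of the previous proposition with a sharper number-theoretic input. The only subtle point is recognizing why $k \geq 3$ is needed: obtaining the factor-of-$2$ improvement requires the non-cyclic structure of $(\Zzz/2^{k}\Zzz)^{*}$, which first appears at $k = 3$.
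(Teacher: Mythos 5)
Your proof is correct and follows essentially the same route as the paper: the paper's proof simply cites the fact that $x^{2^{k-2}} \equiv 1 \bmod 2^{k}$ for odd $x$ and $k \geq 3$ and then reruns the argument of Proposition~\ref{proposition_Euler_p}, exactly as you do. The only difference is that you additionally supply an inductive proof of that classical fact, which the paper takes for granted.
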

\begin{proof}
For an odd integer $x$ we know that
$x^{2^{k-2}} \equiv 1 \bmod 2^{k}$,
which yields the better bound using the same
argument as in the proof of Proposition~\ref{proposition_Euler_p}.
\end{proof}

\begin{proposition}
Let $p$ be a prime and $r$ an integer such that $r \geq k \cdot p$.
If $p^{k}$ divides the $k$ numbers
$A^{r - (p-1)}_{n}$,
$A^{r- 2 \cdot (p-1)}_{n}$,
through
$A^{r- k \cdot (p-1)}_{n}$,
then
$p^{k}$ divides $A^{r}_{n}$.
\label{proposition_recursion}
\end{proposition}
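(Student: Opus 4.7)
The plan is to reduce the claim to a congruence that holds term-by-term in $x = \beta_{n}(S)$, and then to sum over all subsets $S \subseteq [n-1]$. The key observation is that $(x^{p-1}-1)^{k}$ is divisible by $p^{k}$ whenever $x$ is coprime to $p$, because Fermat's little theorem gives $x^{p-1} - 1 = p \cdot u$ for some integer $u$.

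First I would expand this identity with the binomial theorem to obtain
\[
(x^{p-1}-1)^{k} \;=\; \sum_{i=0}^{k} \binom{k}{i} (-1)^{k-i} x^{i(p-1)} \;\equiv\; 0 \pmod{p^{k}}
\]
for every integer $x$ with $p \nmid x$. Multiplying both sides by $x^{r-k(p-1)}$, which is a nonnegative power since $r \geq k \cdot p$, and reindexing with $i \mapsto k-i$, yields
\[
\sum_{i=0}^{k} \binom{k}{i} (-1)^{i} \, x^{r - i(p-1)} \;\equiv\; 0 \pmod{p^{k}}
\]
for $p \nmid x$.

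Next I would verify that the same congruence holds when $p \mid x$. For each $i$ in the range $0 \leq i \leq k$, the exponent $r - i(p-1)$ is at least $r - k(p-1) \geq k$ (using the hypothesis $r \geq kp$), so $x^{r-i(p-1)}$ is divisible by $p^{k}$. Thus the displayed identity holds for every integer $x$, without any coprimality assumption.

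Finally I would specialize $x = \beta_{n}(S)$ and sum over all $S \subseteq [n-1]$, obtaining
\[
\sum_{i=0}^{k} \binom{k}{i} (-1)^{i} \, A^{r - i(p-1)}_{n} \;\equiv\; 0 \pmod{p^{k}} .
\]
The $i=0$ term is $A^{r}_{n}$, and by the hypothesis every remaining term $A^{r - i(p-1)}_{n}$ for $1 \leq i \leq k$ is already divisible by $p^{k}$. Hence $A^{r}_{n} \equiv 0 \pmod{p^{k}}$, as desired. There is no serious obstacle here: the only subtle point is making sure the bound $r \geq kp$ is used precisely where it is needed, namely to guarantee both that the multiplication by $x^{r-k(p-1)}$ preserves the congruence and that the $p \mid x$ case gives $p^{k}$-divisibility for every exponent.
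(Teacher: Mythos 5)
Your proof is correct and follows essentially the same route as the paper: both rest on the observation that $(x^{p-1}-1)^{k}\equiv 0 \bmod p^{k}$ for $p\nmid x$, handle the case $p\mid x$ by noting the relevant power of $x$ is at least $x^{k}$, multiply by the appropriate power of $x$ (the paper factors this as $x^{k}\cdot x^{r-kp}$, which equals your $x^{r-k(p-1)}$), and then substitute $x=\beta_{n}(S)$ and sum over $S$ to get the linear recursion $\sum_{i=0}^{k}(-1)^{i}\binom{k}{i}A^{r-i(p-1)}_{n}\equiv 0 \bmod p^{k}$.
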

\begin{proof}
By Fermat's little theorem we know
$x^{p-1} - 1 \equiv 0 \bmod p$
for $x$ relative prime to $p$.
Hence the $k$th power of this quantity
is divisible by $p^{k}$, that is,
$(x^{p-1} - 1)^{k} \equiv 0 \bmod p^{k}$.
Note that
$x^{k} \equiv 0 \bmod p^{k}$
for $x$ not relative prime to $p$.
Multiplying these two statements we obtain
$$ 
x^{k \cdot p}
-
\binom{k}{1} \cdot x^{k \cdot p - (p-1)}
+ 
\cdots
+
(-1)^{k} \cdot x^{k}
\equiv
0 \bmod p^{k}
$$
for all $x$.
Multiply this polynomial relation with $x^{r - k \cdot p}$,
substitute $x$ to be $\beta_{n}(S)$, and sum over all
$S \subseteq [n-1]$ to obtain the linear recursion
$$ 
A^{r}_{n}
-
\binom{k}{1} \cdot A^{r - (p-1)}_{n}
+ 
\cdots
+
(-1)^{k} \cdot A^{r - k \cdot (p-1)}_{n}
\equiv
0 \bmod p^{k} .
$$
This relation yields the result.
\end{proof}

\begin{example}
{\rm
Note using Table~\ref{table_p_2} that for $8 \leq n \leq 20$, 
the power $2^5$ divides $A^{r}_{n}$ 
when $5\leq r\leq 9$. Hence,
Proposition~\ref{proposition_recursion} gives that 
$2^5$ divides $A^{r}_{n}$ for $r \geq 5$.
}
\label{example_recursion_2}
\end{example}

\begin{example}
{\rm
Using Table~\ref{table_p_3}
we know for $n = 6$ and $8 \leq n \leq 20$
that $3^{2}$ divides $A^{3}_{n}$ and~$A^{5}_{n}$.
Hence,
Proposition~\ref{proposition_recursion}
implies  
for $r$ odd and $r \geq 3$ that
$3^{2}$ divides $A^{r}_{n}$.
Similarly, we know 
for $n \in \{9,10,12,13,15,16,18,19,20\}$
that $3^{3}$ divides $A^{3}_{n}$, $A^{5}_{n}$ and $A^{7}_{n}$.
Therefore, 
for these same values of $n$,
$3^{3}$ divides $A^{r}_{n}$ for $r$ odd and $r \geq 3$.
}
\label{example_recursion_3}
\end{example}

\begin{remark}
{\rm
Note that Propositions~\ref{proposition_Euler_p} 
through~\ref{proposition_recursion} apply to any
sequence of the form
$\sum_{i=1}^{N} c_{i} \cdot d_{i}^{r}$
where $c_{i}$ and $d_{i}$ are integers.
}
\end{remark}

We end this section by reviewing 
Lucas' theorem, see~\cite[Chapter~XXIII, Section~228]{Lucas},
and Kummer's theorem,
see~\cite{Kummer},
for multinomial coefficients.
\begin{theorem}[Lucas]
Let $p$ be a prime
and $\vec{c} = (c_{1}, c_{2}, \ldots, c_{k})$ be a weak composition of $n$,
that is, $0$ is allowed as an entry.
Expand $n$ and each $c_{i}$
in base $p$, that is,
$n = \sum_{j \geq 0} n_{j} \cdot p^{j}$
and
$c_{i} = \sum_{j \geq 0} c_{i,j} \cdot p^{j}$
where
$0 \leq n_{j}, c_{i,j} \leq p-1$.
Let
$\vec{c}_{j}$ be the weak composition
$\vec{c}_{j} = (c_{1,j}, c_{2,j}, \ldots, c_{k,j})$.
Then the multinomial coefficient
$\binom{n}{\vec{c}}$ modulo $p$
is given by
$$
\binom{n}{\vec{c}}
\equiv
\prod_{j \geq 0}
\binom{n_{j}}{\vec{c}_{j}}
\bmod p  .
$$
\label{theorem_Lucas}
\end{theorem}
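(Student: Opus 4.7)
The plan is to extend the standard generating-function proof of the classical Lucas theorem to the multivariate setting, working in the polynomial ring $\mathbb{F}_{p}[x_{1}, \ldots, x_{k}]$. The essential ingredient is the Frobenius endomorphism, which yields
$$
(x_{1} + x_{2} + \cdots + x_{k})^{p^{j}}
\equiv
x_{1}^{p^{j}} + x_{2}^{p^{j}} + \cdots + x_{k}^{p^{j}}
\bmod p
$$
for every $j \geq 0$, obtained by iterating the mod-$p$ identity $(y_{1} + \cdots + y_{k})^{p} \equiv y_{1}^{p} + \cdots + y_{k}^{p}$.

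Using the base-$p$ expansion $n = \sum_{j \geq 0} n_{j} p^{j}$ together with the identity above, I would decompose
$$
(x_{1} + \cdots + x_{k})^{n}
=
\prod_{j \geq 0} \bigl((x_{1} + \cdots + x_{k})^{p^{j}}\bigr)^{n_{j}}
\equiv
\prod_{j \geq 0} \bigl(x_{1}^{p^{j}} + \cdots + x_{k}^{p^{j}}\bigr)^{n_{j}}
\bmod p,
$$
and then compare the coefficient of $x_{1}^{c_{1}} x_{2}^{c_{2}} \cdots x_{k}^{c_{k}}$ on both sides. The left-hand coefficient is, by definition, the multinomial $\binom{n}{\vec{c}}$. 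On the right-hand side, each factor indexed by $j$ contributes a monomial $\binom{n_{j}}{d_{1,j}, \ldots, d_{k,j}} x_{1}^{d_{1,j} p^{j}} \cdots x_{k}^{d_{k,j} p^{j}}$ with $\sum_{i} d_{i,j} = n_{j}$, and the target monomial $x_{1}^{c_{1}} \cdots x_{k}^{c_{k}}$ arises precisely when $\sum_{j} d_{i,j} p^{j} = c_{i}$ for every $i$.

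The crux is isolating the surviving tuple. Since $0 \leq d_{i,j} \leq n_{j} \leq p - 1$, uniqueness of the base-$p$ expansion of $c_{i}$ forces $d_{i,j} = c_{i,j}$, so at most one tuple contributes and the right-hand coefficient equals $\prod_{j \geq 0} \binom{n_{j}}{\vec{c}_{j}}$. When this forced choice violates $\sum_{i} c_{i,j} = n_{j}$ for some $j$, which is exactly the case where carries occur in the base-$p$ addition $c_{1} + c_{2} + \cdots + c_{k} = n$, the factor $\binom{n_{j}}{\vec{c}_{j}}$ vanishes; then both sides of the polynomial identity contribute zero to the monomial, forcing $\binom{n}{\vec{c}} \equiv 0 \bmod p$, still matching the claimed formula. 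Managing this digit bookkeeping and verifying that the uniqueness argument respects the upper bound $n_{j} \leq p-1$ is the only delicate point; once it is in place, the theorem follows immediately by equating coefficients in $\mathbb{F}_{p}[x_{1}, \ldots, x_{k}]$.
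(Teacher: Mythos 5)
The paper does not prove this statement at all: Lucas' theorem is quoted as a classical result with a citation to Lucas' \emph{Th\'eorie des nombres}, so there is no in-paper proof to compare against. Your argument is the standard generating-function proof of the multinomial Lucas theorem and it is correct. The Frobenius step $(x_{1}+\cdots+x_{k})^{p^{j}}\equiv x_{1}^{p^{j}}+\cdots+x_{k}^{p^{j}}\bmod p$ is sound, and you handle the one genuinely delicate point properly: in the expansion of $\bigl(x_{1}^{p^{j}}+\cdots+x_{k}^{p^{j}}\bigr)^{n_{j}}$ the exponent data satisfies $0\leq d_{i,j}\leq n_{j}\leq p-1$, so uniqueness of base-$p$ representation pins down $d_{i,j}=c_{i,j}$, and at most one tuple of choices can produce the monomial $x_{1}^{c_{1}}\cdots x_{k}^{c_{k}}$. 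Your treatment of the degenerate case is also right: if $\sum_{i}c_{i,j}\neq n_{j}$ for some $j$ (equivalently, a carry occurs), the forced tuple is not realizable, the right-hand coefficient is $0$, and the convention $\binom{n_{j}}{\vec{c}_{j}}=0$ makes the stated product formula agree. The only cosmetic remark is that the product over $j\geq 0$ should be read as a finite product (factors with $n_{j}=0$ equal $1$), which is implicit in what you wrote. The proof is complete as it stands.
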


Let $\carries_{p}(\vec{c})$ denote the number of carries when
adding $c_{1} + c_{2} + \cdots + c_{k}$ in base $p$.

\begin{theorem}[Kummer]
For a prime $p$ and a composition
$\vec{c} = (c_{1}, c_{2}, \dots, c_{k})$ of $n$,
the largest power~$d$ such that $p^{d}$ 
divides the multinomial coefficient $\binom{n}{\vec{c}}$
is given by $\carries_{p}(\vec{c})$.
\label{theorem_Kummer}
\end{theorem}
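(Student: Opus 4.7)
The plan is to prove Kummer's theorem by reducing to Legendre's formula for $v_p(n!)$ and then identifying the resulting expression with the carry count. Writing $v_p(m)$ for the $p$-adic valuation of $m$, Legendre's formula states
$$v_p(m!) = \sum_{j \geq 1} \left\lfloor \frac{m}{p^j} \right\rfloor = \frac{m - s_p(m)}{p-1},$$
where $s_p(m)$ denotes the digit sum of $m$ in base $p$. This formula is the natural starting point since $\binom{n}{\vec{c}} = n!/(c_1! \cdots c_k!)$ is a ratio of factorials.

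The first step is to apply Legendre's formula termwise. Since $n = c_1 + c_2 + \cdots + c_k$, the $n$-contributions cancel and one is left with
$$v_p\!\left(\binom{n}{\vec{c}}\right) = \frac{1}{p-1}\left(\sum_{i=1}^{k} s_p(c_i) - s_p(n)\right).$$
The only remaining task is to identify the right-hand side with $\carries_p(\vec{c})$.

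The second and main step is to verify the identity
$$\sum_{i=1}^{k} s_p(c_i) = s_p(n) + (p-1)\cdot \carries_p(\vec{c}).$$
I would prove this by analyzing the schoolbook addition algorithm column by column. Let $t_j$ denote the sum of the $j$th digits of $c_1, \ldots, c_k$ together with the carry coming in from column $j-1$. Writing $t_j = n_j + p \cdot e_j$, where $n_j$ is the $j$th digit of $n$ and $e_j$ is the carry out of column $j$, and summing over all columns $j \geq 0$ gives
$$\sum_{i=1}^{k} s_p(c_i) + \sum_{j \geq 1} e_{j-1} = s_p(n) + p \cdot \sum_{j \geq 0} e_j.$$
Rearranging, $\sum_{i} s_p(c_i) - s_p(n) = (p-1)\sum_{j \geq 0} e_j$, and the right-hand sum is exactly the total number of carries. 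Substituting into the expression from the first step yields $v_p\!\left(\binom{n}{\vec{c}}\right) = \carries_p(\vec{c})$, as desired.

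The main obstacle is bookkeeping in the second step: unlike the binary addition of two numbers, a column can now contribute an arbitrarily large carry when $k$ is large, so one must be careful that $e_j$ is defined as the full quotient $\lfloor t_j / p \rfloor$ and that ``number of carries'' is interpreted as the sum $\sum_j e_j$ (counting each nonzero carry with multiplicity), which is the convention compatible with the binomial-coefficient case and with the formula above. Once this convention is in place, the telescoping computation goes through cleanly and no further ingredients are needed beyond Legendre's formula.
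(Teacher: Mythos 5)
Your proof is correct, but note that the paper does not actually prove this statement: Kummer's theorem is quoted as a classical result with a reference to Kummer's 1852 paper, so there is no in-paper argument to match. Your route is the standard one --- Legendre's formula $v_p(m!) = (m - s_p(m))/(p-1)$ applied to $n!/(c_1!\cdots c_k!)$, followed by the digit-sum identity $\sum_i s_p(c_i) = s_p(n) + (p-1)\cdot\carries_p(\vec{c})$ --- and both steps check out. It is worth pointing out that your second step is precisely the unnamed lemma the paper proves at the start of Section~\ref{section_number_of_prime_factors} ($u_p(\vec{c}) = (p-1)\cdot\carries_p(\vec{c}) + u_p(n)$); the paper argues it informally by tracking where each ``unit'' ends up, whereas your column-by-column telescoping with $t_j = n_j + p\cdot e_j$ is cleaner and makes the bookkeeping explicit. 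Your caveat about the carry convention is the right one to flag: for $k > 2$ a single column can emit a carry $e_j > 1$, and $\carries_p(\vec{c})$ must be read as $\sum_j e_j$ with multiplicity for the theorem to hold. This is consistent with how the paper uses the quantity elsewhere, e.g.\ $\carries_p(1,1,\ldots,1) = (n - u_p(n))/(p-1)$ in the corollary counting factors of $p$ in $n!$. No gaps.
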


\section{Divisibility by odd primes}
\label{section_divisibility}

First, we express the sum 
$A^{r}_{n}=\sum_{S\subseteq [n-1]} \beta_{n}(S)^{r}$ 
in terms of $\alpha_{n}(S)$.
\begin{lemma}
When $r$ is even, $A^{r}_{n}$ is given by
\begin{align}
A^{r}_{n}  	
& = 
\sum_{T_{1}, T_{2}, \ldots, T_{r} \subseteq [n-1]}
(-1)^{\sum_{i=1}^{r} |T_{i}|}
\cdot 
2^{n-1 - |\bigcup_{i=1}^{r} T_{i}|}
\cdot
\prod_{i=1}^{r}
\alpha_{n}(T_{i}) . 
\label{equation_expansion_r_even} \\
\intertext{When $r$ is odd, we have}
A^{r}_{n}  	
& = 
\sum_{\substack{T_{1}, T_{2}, \ldots, T_{r} \subseteq [n-1] \\
T_{1} \cup T_{2} \cupdots T_{r} = [n-1]}}
(-1)^{n-1 + \sum_{i=1}^{r} |T_{i}|}
\cdot 
\prod_{i=1}^{r}
\alpha_{n}(T_{i}) .
\label{equation_expansion_r_odd}
\end{align}
\label{lemma_expansion}
\end{lemma}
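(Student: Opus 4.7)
The plan is to invert the defining relation $\alpha_{n}(S) = \sum_{T \subseteq S} \beta_{n}(T)$ via Möbius inversion on the Boolean lattice to obtain
$$ \beta_{n}(S) = \sum_{T \subseteq S} (-1)^{|S|-|T|} \alpha_{n}(T). $$
I would then raise this identity to the $r$th power, yielding
$$ \beta_{n}(S)^{r} = \sum_{T_{1}, \ldots, T_{r} \subseteq S} (-1)^{r|S| - \sum_{i} |T_{i}|} \prod_{i=1}^{r} \alpha_{n}(T_{i}), $$
sum over all $S \subseteq [n-1]$, and interchange the order of summation so that the $T_{i}$ range over all subsets of $[n-1]$ and $S$ ranges over supersets of $U := T_{1} \cup \cdots \cup T_{r}$.

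The whole argument then reduces to evaluating the inner sum $\sum_{S \supseteq U} (-1)^{r|S|}$, and this is where the split by parity of $r$ enters. For $r$ even, the sign is trivial and one obtains the factor $2^{n-1-|U|}$ counting supersets of $U$ inside $[n-1]$, which gives (\ref{equation_expansion_r_even}) directly after noting $(-1)^{-\sum |T_{i}|} = (-1)^{\sum |T_{i}|}$. For $r$ odd, writing each superset as $S = U \sqcup V$ with $V \subseteq [n-1] \setminus U$ turns the inner sum into $(-1)^{|U|} \sum_{V \subseteq [n-1]\setminus U} (-1)^{|V|}$, which is $0$ unless $U = [n-1]$, in which case it equals $(-1)^{n-1}$. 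This collapse forces the outer sum to be restricted to tuples with $T_{1} \cup \cdots \cup T_{r} = [n-1]$, producing (\ref{equation_expansion_r_odd}) after absorbing the sign $(-1)^{n-1}$ into the exponent.

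I expect the only subtle point to be the sign bookkeeping: keeping track of $(-1)^{r|S|}$ versus $(-1)^{|S|}$, and of $(-1)^{|U|}$ combining with $(-1)^{\sum|T_{i}|}$ in the odd case (which is fine because $|U| \equiv \sum |T_{i}| \bmod 2$ is \emph{not} needed — the $(-1)^{|U|}$ comes out of the inner sum and one simply checks that $\sum |T_{i}| + (n-1)$ matches the exponent $n-1 + \sum |T_{i}|$ in the stated formula). No other obstacle arises; the lemma follows by routine manipulation after the Möbius inversion step.
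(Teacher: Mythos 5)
Your proposal is correct and follows essentially the same route as the paper: M\"obius inversion of $\alpha_{n}(S)=\sum_{T\subseteq S}\beta_{n}(T)$, expansion of the $r$th power, interchange of summation, and evaluation of the inner sum $\sum_{S\supseteq \bigcup_i T_i}(-1)^{r|S|}$ according to the parity of $r$. The only difference is that you spell out the odd-case collapse (via the decomposition $S=U\sqcup V$ and the vanishing of $\sum_{V}(-1)^{|V|}$ unless $U=[n-1]$) in more detail than the paper does, and your sign bookkeeping checks out.
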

\begin{proof}
We begin by expanding $\beta_{n}(S)$ in terms of $\alpha_{n}(S)$:
\begin{align*}
A^{r}_{n}  	
& = 
\sum_{S \subseteq [n-1]} 
\beta_{n}(S)^{r} \\
& = 
\sum_{S \subseteq [n-1]}
\prod_{i=1}^{r}
\left(
\sum_{T_{i} \subseteq S}
(-1)^{|S-T_{i}|}
\cdot 
\alpha_{n}(T_{i})
\right)  \\
& = 
\sum_{T_{1}, T_{2}, \ldots, T_{r} \subseteq [n-1]}
\:
\sum_{T_{1} \cup T_{2} \cupdots T_{r} \subseteq S \subseteq [n-1]}
(-1)^{r \cdot |S|}
\cdot
(-1)^{\sum_{i=1}^{r} |T_{i}|}
\cdot 
\prod_{i=1}^{r}
\alpha_{n}(T_{i}) .
\end{align*}
When $r$ is even, we have $(-1)^{r \cdot |S|} = 1$,
and the inner sum has $2^{n-1 - |\bigcup_{i=1}^{r} T_{i}|}$ terms.
When $r$ is odd, the inner sum is zero unless
the union $\bigcup_{i=1}^{r} T_{i}$ is the whole set $[n-1]$.
\end{proof}

\begin{theorem}
Let $p$ be an odd prime and $r$ an even positive integer.
Assume that $m$ and $n$ contain the same non-zero digits
in base $p$.
Then the congruence 
$2^{-m} \cdot A^{r}_{m} \equiv 2^{-n} \cdot A^{r}_{n} \bmod p$ holds.
Especially,
the prime $p$ divides $A_{m}^{r}$ if and only if
$p$ divides $A_{n}^{r}$.
\label{theorem_mod_p_result}
\end{theorem}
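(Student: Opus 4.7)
The plan is to apply the $r$-even expansion~\eqref{equation_expansion_r_even} of Lemma~\ref{lemma_expansion} and reduce modulo the odd prime $p$. Since $2$ is invertible modulo $p$, dividing through by $2^{n-1}$ gives
\[
2^{-(n-1)} A^r_n
\equiv
\sum_{T_1, \ldots, T_r \subseteq [n-1]}
(-1)^{\sum_i |T_i|}
\cdot
2^{-|\bigcup_i T_i|}
\cdot
\prod_i \alpha_n(T_i)
\pmod p,
\]
and by Kummer's theorem (Theorem~\ref{theorem_Kummer}) only those $r$-tuples survive for which every $\alpha_n(T_i) = \binom{n}{\co(T_i)}$ arises from a composition adding to $n$ with no carries in base $p$.

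I would then encode each no-carry composition $\co(T) = (c_1, \ldots, c_k)$ by the matrix $E = (e_{i,j})$ of base-$p$ digits of the parts, so that the no-carry condition becomes $\sum_i e_{i,j} = n_j$ for every $j$. Columns corresponding to a zero digit of $n$ are necessarily zero and can be deleted; the remaining data is a matrix with columns indexed by the nonzero digit positions of $n$, column sums equal to those digits, and no all-zero row. This depends only on the multiset of nonzero digits, so it gives a natural bijection between no-carry compositions of $n$ and of $m$; under it, Lucas' theorem (Theorem~\ref{theorem_Lucas}) yields $\alpha_n(T_i) \equiv \alpha_m(T_i') \pmod p$ for the paired compositions.

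It remains to verify that the sign $(-1)^{|T_i|}$ and the exponent $|\bigcup_i T_i|$ are also preserved by this bijection. Since $|T_i| = k_i - 1$ is the number of rows of the matrix $E^{(i)}$ minus one, the sign is immediate. For the union, each element of $T_i$ is a partial sum $c_1 + \cdots + c_\ell$ of a no-carry composition, so its $j$th base-$p$ digit equals the cumulative row sum $\sum_{i' \leq \ell} e^{(i)}_{i',j}$ and is uniformly bounded by $n_j$; such an integer is therefore uniquely determined by its digit vector, which is a position-free function of the matrices $E^{(i)}$. Hence two partial sums coincide for $n$ exactly when the corresponding ones coincide for $m$, so $|\bigcup_i T_i|$ is preserved. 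Assembling these pieces, the two sums match term-by-term modulo $p$, giving $2^{-(n-1)} A^r_n \equiv 2^{-(m-1)} A^r_m \pmod p$, and dividing by $2$ yields the claimed congruence. The main obstacle is this last invariance, which turns on showing that partial sums of no-carry compositions are distinguished by their digit vectors independently of the actual positions of the nonzero digits.
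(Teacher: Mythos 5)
Your proposal is correct and follows essentially the same route as the paper: restrict the even-$r$ expansion modulo $p$ to the no-carry terms via Kummer, transport them through a bijection determined by matching the nonzero base-$p$ digits of $m$ and $n$, apply Lucas to preserve the $\alpha$ values, and check that $|T_i|$ and $|\bigcup_i T_i|$ are invariant. The only cosmetic difference is that the paper realizes the bijection as a digit-position permutation $f$ and verifies the union cardinality by inclusion--exclusion over the intersections $T_I$ (which lie in the carry-free family), whereas you argue directly that partial sums of no-carry compositions are determined by their digit vectors; both are valid.
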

\begin{proof}
Let $m$ and $n$ have the base $p$ expansions
$m = \sum_{j \geq 0} m_{j} \cdot p^{j}$
and
$n = \sum_{j \geq 0} n_{j} \cdot p^{j}$.
Then there exists a permutation $\pi$ on the 
non-negative integers such that
$m_{j} = n_{\pi(j)}$ for all $j \geq 0$.
Essentially, the permutation $\pi$ permutes
the powers of the prime $p$.
Define a bijection $f$ on the non-negative
integers by
$f\left(\sum_{j \geq 0} a_{j} \cdot p^{j}\right)
=
\sum_{j \geq 0} a_{j} \cdot p^{\pi(j)}$,
where $0 \leq a_{j} \leq p-1$.
Note that
$$
f(m)
=
\sum_{j \geq 0} m_{j} \cdot p^{\pi(j)}
=
\sum_{j \geq 0} n_{\pi(j)} \cdot p^{\pi(j)}
=
\sum_{j \geq 0} n_{j} \cdot p^{j}
=
n .
$$
Furthermore, when there are no carries
adding $x$ and $y$ in base $p$,
this function is additive, that is,
$f(x+y) = f(x) + f(y)$.
Also note that the inverse function $f^{-1}$
is additive under the same condition.
In terms of compositions, we have that
if $\vec{c} = (c_{1}, c_{2}, \ldots, c_{k})$
is a composition of~$m$
such that $\carries_{p}(\vec{c}) = 0$,
then the
composition
$f(\vec{c}) = (f(c_{1}), f(c_{2}), \ldots, f(c_{k}))$
is a composition of $f(m) = n$.

Let the non-carry power set $\NCP(m)$ be the collection
of all subsets of $[m-1]$ whose associated composition
has no carries when added in base $p$,
that is,
$$
\NCP(m)
=
\{ T \subseteq [m-1] \:\: : \:\: \carries_{p}(\co(T)) = 0 \} .
$$
Observe that $\NCP(m)$ is closed under inclusion.
Note that we can define
a bijection
$f : \NCP(m) \longrightarrow \NCP(n)$
by composing
the three maps
$$
\NCP(m)
\stackrel{\co}{\longrightarrow}
\{ \vec{c} \in \Comp(m) : \carries_{p}(\vec{c}) = 0 \}
\stackrel{f}{\longrightarrow}
\{ \vec{d} \in \Comp(n) : \carries_{p}(\vec{d}) = 0 \}
\stackrel{\co^{-1}}{\longrightarrow}
\NCP(n) .
$$
Since the compositions $\vec{c}$ and $f(\vec{c})$
have the same length,
the function $f$ preserves cardinality.
But there is a more direct description of the last map $f$
on sets. For
$T = \{t_{1} < t_{2} < \cdots < t_{k}\} \in \NCP(m)$,
we claim that
$f(T) = \{f(t_{1}), f(t_{2}), \ldots, f(t_{k})\}$.
Let $\vec{c}$ be the composition $\co(T)$.
By definition, the $i$th element of $f(T)$ is the initial partial sum
of the $i$ first elements of $f(\vec{c})$, that is,
$f(c_{1}) + \cdots + f(c_{i})$. Since the whole sum
$c_{1} + \cdots + c_{k}$ has no carries,
the partial sum also has no carries.
Hence, the $i$th element of $f(T)$ is given by
$f(c_{1}) + \cdots + f(c_{i})
=
f(c_{1} + \cdots + c_{i})
=
f(t_{i})$, proving the claim.

Also note that for a composition $\vec{c}$
without any carries, we have 
by Lucas' Theorem that
$$  \binom{m}{\vec{c}} \equiv \binom{f(m)}{f(\vec{c})} \bmod p , $$
since the factors of the product in Lucas' Theorem
are permuted by the permutation $\pi$.
Hence, for a set $T$ in $\NCP(m)$ we know that
$\alpha_{m}(T) = \alpha_{n}(f(T))$.

We now use the expansion in
equation~\eqref{equation_expansion_r_even}.
Let $\vec{c}\,^{i}$ be the composition
associated with the subset~$T_{i}$ of~$[m-1]$.
Similarly, let $U_{i}$ be
the subset of $[n-1]$ associated
with the composition
$f(\vec{c}\,^{i}) = \vec{d}\,^{i}$.
Next we study the two unions
$\bigcup_{i=1}^{r} T_{i}$
and
$\bigcup_{i=1}^{r} U_{i}$.
However, they may not be in the collection $\NCP(m)$, respectively, $\NCP(n)$.

For~$I$ a non-empty subset of the index set $[r]$ let
$T_{I}$ be the intersection
$\bigcap_{i \in I} T_{i}$.
Note that $T_{I}$ belongs to $\NCP(m)$
since this collection is closed under inclusion.
Similarly let
$U_{I}$ be the intersection
$\bigcap_{i \in I} U_{i}$
which belongs to $\NCP(n)$.
Note that $f(T_{I}) = U_{I}$
so the two sets $T_{I}$ and $U_{I}$ have the same cardinality.
By inclusion-exclusion we have
\begin{align*}
\left| \bigcup_{i=1}^{r} T_{i} \right|
& =
\sum_{\emptyset \subsetneqq I \subseteq [r]}
(-1)^{|I| - 1}
\cdot
|T_{I}|
=
\sum_{\emptyset \subsetneqq I \subseteq [r]}
(-1)^{|I| - 1}
\cdot
|U_{I}|
=
\left| \bigcup_{i=1}^{r} U_{i} \right| .
\end{align*}

Now observe that
the non-zero terms in
equation~\eqref{equation_expansion_r_even}
modulo $p$
are the terms where $T_{i}$ belongs to~$\NCP(m)$.
Hence, modulo~$p$ we have that
\begin{align*}
A^{r}_{m}  	
& \equiv 
\sum_{T_{1}, T_{2}, \ldots, T_{r} \in \NCP(m)}
(-1)^{\sum_{i=1}^{r} |T_{i}|}
\cdot 
2^{m-1 - |\bigcup_{i=1}^{r} T_{i}|}
\cdot
\prod_{i=1}^{r}
\alpha_{m}(T_{i}) \\
& \equiv 
2^{m-n}
\cdot
\sum_{U_{1}, U_{2}, \ldots, U_{r} \in \NCP(n)}
(-1)^{\sum_{i=1}^{r} |U_{i}|}
\cdot 
2^{n-1 - |\bigcup_{i=1}^{r} U_{i}|}
\cdot
\prod_{i=1}^{r}
\alpha_{n}(U_{i}) \\
& \equiv 
2^{m-n}
\cdot
A^{r}_{n}  	\bmod p .
\end{align*}
This proves the identity.
Finally, since $2$ is invertible modulo $p$, we obtain
that $A^{r}_{m}$ and $A^{r}_{n}$
either both have a factor of $p$ or
none of them have a factor of $p$.
\end{proof}

\begin{corollary}
When $r$ is even and $p$ is an odd prime,
the congruence
$A^{r}_{pn} \equiv A^{r}_{n} \bmod p$
holds.
\end{corollary}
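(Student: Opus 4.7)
The plan is to apply Theorem~\ref{theorem_mod_p_result} directly, with the pair of integers being $n$ and $pn$. Multiplying by $p$ in base $p$ simply shifts every digit up by one position (appending a zero), so the multisets of non-zero base-$p$ digits of $n$ and of $pn$ coincide. Hence the hypothesis of Theorem~\ref{theorem_mod_p_result} is satisfied for any even $r$ and odd prime $p$.

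Applying the theorem therefore yields
$$
2^{-pn} \cdot A^{r}_{pn} \equiv 2^{-n} \cdot A^{r}_{n} \bmod p ,
$$
which after clearing denominators (legitimate because $p$ is odd, so $2$ is invertible mod $p$) becomes
$$
A^{r}_{pn} \equiv 2^{(p-1) n} \cdot A^{r}_{n} \bmod p .
$$
The final step is to invoke Fermat's little theorem: since $p$ is odd, $2^{p-1} \equiv 1 \bmod p$, and consequently $2^{(p-1) n} \equiv 1 \bmod p$. Substituting into the displayed congruence gives $A^{r}_{pn} \equiv A^{r}_{n} \bmod p$.

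There is no genuine obstacle here; the entire proof is essentially a single line of bookkeeping on top of Theorem~\ref{theorem_mod_p_result}. The only point worth noting explicitly in writing the proof is the observation that the non-zero digit multisets of $n$ and $pn$ agree, which is the sole content needed to license the application of the theorem.
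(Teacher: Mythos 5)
Your proof is correct and follows the paper's argument exactly: both apply Theorem~\ref{theorem_mod_p_result} after noting that $n$ and $pn$ share the same non-zero base-$p$ digits, and both then dispose of the factor $2^{pn-n}$ via Fermat's little theorem (the paper writes this as $2^{pn}\equiv(2^{n})^{p}\equiv 2^{n}\bmod p$, which is the same computation). No issues.
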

\begin{proof}
Since $n$ and $p \cdot n$ have the same non-zero
digits modulo $p$, Theorem~\ref{theorem_mod_p_result} applies.
Hence, it is enough to observe that
$2^{pn} \equiv (2^{n})^{p} \equiv 2^{n} \bmod p$
using Fermat's little theorem.
\end{proof}

\begin{corollary}
When $r$ is even and $p$ is an odd prime, $A^{r}_{p^{k}}$ is not divisible by $p$.
\end{corollary}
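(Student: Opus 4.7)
The plan is to apply Theorem~\ref{theorem_mod_p_result} (or, equivalently, iterate the preceding corollary $k$ times) with $m=1$ and $n=p^{k}$. Both integers have exactly one nonzero digit in base $p$, namely a single $1$, so they have the same nonzero digit multiset and the hypothesis of Theorem~\ref{theorem_mod_p_result} is satisfied.

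I would first compute the base case $A^{r}_{1}$ directly: the symmetric group $\SSSS_{1}$ has only the identity permutation, whose descent set is the empty subset of $[0]=\emptyset$, so $\beta_{1}(\emptyset)=1$ and
$$
A^{r}_{1}\;=\;\sum_{S\subseteq\emptyset}\beta_{1}(S)^{r}\;=\;1^{r}\;=\;1.
$$
Then, applying Theorem~\ref{theorem_mod_p_result},
$$
2^{-p^{k}}\cdot A^{r}_{p^{k}}\;\equiv\;2^{-1}\cdot A^{r}_{1}\;\equiv\;2^{-1}\bmod p,
$$
that is, $A^{r}_{p^{k}}\equiv 2^{p^{k}-1}\bmod p$. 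Since $p$ is odd, $2$ is a unit modulo $p$, so $2^{p^{k}-1}\not\equiv 0\bmod p$, and hence $p\nmid A^{r}_{p^{k}}$.

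There is essentially no obstacle here: the corollary is an immediate specialization of the previous theorem, and the only verification needed is the trivial base case $A^{r}_{1}=1$ and the observation that $1$ and $p^{k}$ share the same nonzero base-$p$ digits. One could equally well phrase the argument as $k$ successive applications of the preceding corollary, giving $A^{r}_{p^{k}}\equiv A^{r}_{p^{k-1}}\equiv\cdots\equiv A^{r}_{1}=1\bmod p$.
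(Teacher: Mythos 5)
Your proof is correct and follows the same route as the paper: the paper's own proof simply says it suffices to check that $A^{r}_{1}=1$ is not divisible by $p$, implicitly invoking the preceding corollary/theorem exactly as you do explicitly. Your extra detail (the explicit congruence $A^{r}_{p^{k}}\equiv 2^{p^{k}-1}\bmod p$, which by Fermat is in fact $\equiv 1$) is a harmless elaboration of the same argument.
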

\begin{proof}
It is enough to check that
$A^{r}_{1} = 1$ is not divisible by $p$.
\end{proof}

\begin{example}
{\rm
We can compute $A^{2}_{14}$ to observe that this number has
a factor of $3$. Hence by
Proposition~\ref{proposition_Euler_p}
we know that for all even $r$,
the prime $3$ divides $A^{r}_{14}$.
Furthermore, $14$ in base $3$ consists of
two $1$'s and one $2$.
Hence
Theorem~\ref{theorem_mod_p_result}
implies for
$n = 16, 22, 32, 34, 38, 42, 46, 48, 58, 64, 66, 86, 88, \ldots$
that $3$ divides $A^{r}_{n}$ as well.
}
\label{example_3_14}
\end{example}

\begin{example}
{\rm
Note that $5$ divides $A^{2}_{3} = 10$.
Hence, we know that $5$ divides $A^{4 \cdot i + 2}_{n}$ for $n$ 
of the form $3 \cdot 5^{k}$.
One may compute that
$5$ also divides $A^{2}_{12}$ and $A^{2}_{13}$.
This implies that $5$ divides $A^{4 \cdot i + 2}_{n}$ for $n$ belonging to
the following two sequences:
$12, 52, 60, 252, 260, 300, 1252, 1260, 1300, \ldots$
and
$13, 17, 53, 65, 77, 85, 253, 265, 325, 377, 385,  \ldots$.
}
\label{example_5}
\end{example}

\section{On the number of prime factors}
\label{section_number_of_prime_factors}

For a positive integer~$n$,
let $u_{p}(n)$ be the sum of the digits when $n$ is written in base $p$.
More formally,
for $n = \sum_{i \geq 0} n_{i} \cdot p^{i}$,
where $0 \leq n_{i} \leq p-1$,
the function $u_{p}(n)$ is given by the sum
$\sum_{i \geq 0} n_{i}$.
Furthermore, for a composition $\vec{c} = (c_{1}, c_{2}, \ldots, c_{k})$,
define $u_{p}(\vec{c})$ to be the sum of the digits
when all the parts of $\vec{c}$ are written in base $p$,
that is,
$u_{p}(\vec{c}) = \sum_{i=1}^{k} u_{p}(c_{i})$.
Finally, recall that
$\carries_{p}(\vec{c})$ denotes the number of carries when
adding $c_{1} + c_{2} + \cdots + c_{k}$ in base $p$.

\begin{lemma}
For a composition $\vec{c}$ of $n$,
the sum of its digits in base $p$ satisfies
$$ u_{p}(\vec{c}) = (p-1)\cdot \carries_{p}(\vec{c}) + u_{p}(n) . $$
\end{lemma}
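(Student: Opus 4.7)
The plan is to give a direct column-by-column analysis of the base-$p$ addition of $c_{1} + c_{2} + \cdots + c_{k}$, tracking the digit sums and the carries in parallel. Write each part as $c_{i} = \sum_{j \geq 0} c_{i,j} \cdot p^{j}$, and let $s_{j} = \sum_{i=1}^{k} c_{i,j}$ be the total of the column at position $j$, let $\epsilon_{j}$ be the carry coming \emph{into} position $j$ (so $\epsilon_{0} = 0$), and let $n_{j}$ be the $j$th base-$p$ digit of $n = \sum_{i} c_{i}$. The fundamental per-column relation is
\[
s_{j} + \epsilon_{j} = n_{j} + p \cdot \epsilon_{j+1},
\]
which merely says that the column sum plus the incoming carry equals the output digit plus $p$ times the outgoing carry.

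Next I would sum this relation over all $j \geq 0$ (a finite sum, since all quantities are eventually zero). The left-hand side contributes $u_{p}(\vec{c}) + \sum_{j \geq 0} \epsilon_{j}$, and the right-hand side contributes $u_{p}(n) + p \cdot \sum_{j \geq 0} \epsilon_{j+1}$. Because $\epsilon_{0} = 0$, the shifted sums $\sum_{j \geq 0} \epsilon_{j}$ and $\sum_{j \geq 0} \epsilon_{j+1}$ agree, and each counts precisely the number of carries in the addition, which is $\carries_{p}(\vec{c})$. Rearranging the identity
\[
u_{p}(\vec{c}) + \carries_{p}(\vec{c}) = u_{p}(n) + p \cdot \carries_{p}(\vec{c})
\]
immediately gives the claim.

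There is very little that can go wrong here; the main care is bookkeeping the carry indexing (ensuring $\epsilon_{0} = 0$ and that the reindexing of the carry sum is valid) so that the two carry contributions really do collapse into a single factor of $(p-1) \cdot \carries_{p}(\vec{c})$. As an alternative route worth mentioning, one could apply Legendre's formula $v_{p}(m!) = (m - u_{p}(m))/(p-1)$ to express $v_{p}\!\binom{n}{\vec{c}}$ as $(u_{p}(\vec{c}) - u_{p}(n))/(p-1)$ and then invoke Kummer's theorem (Theorem~\ref{theorem_Kummer}) to identify this with $\carries_{p}(\vec{c})$; but the column-by-column derivation above is self-contained and makes the role of the factor $(p-1)$ transparent.
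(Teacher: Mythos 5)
Your column-by-column recurrence $s_{j} + \epsilon_{j} = n_{j} + p\cdot\epsilon_{j+1}$, summed over $j$, is correct (noting that carries are counted with multiplicity, as both you and the paper implicitly do) and is just a formal algebraic rendering of the paper's own argument, which tracks each of the $u_{p}(\vec{c})$ units through the base-$p$ addition and observes that it either joins $p-1$ others to form a carry or survives as a unit of $n$. Same approach, so nothing further to add.
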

\begin{proof}
If one lines up the parts $c_{1}, c_{2}, \dots, c_{k}$ of $\vec{c}$
in base $p$, note that any one of the $u_{p}(\vec{c})$ units
in any of these addends has only two options: 
It may either contribute to a carry 
along with another $p-1$ units in its column,
or it can directly become one of the $u_{p}(n)$ units in $n$.
\end{proof}

\begin{corollary}
Let $p$ be a prime. Then the number
of factors of $p$ in $A^{1}_{n}$ is 
$(n - u_{p}(n))/(p-1)$.
\end{corollary}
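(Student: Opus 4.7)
The plan is to combine Kummer's theorem (Theorem~\ref{theorem_Kummer}) with the lemma just proved, applied to the all-ones composition. The key observation is that $A^{1}_{n} = n!$ can be written as a multinomial coefficient in a very trivial way.

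First I would recall from Section~\ref{section_preliminaries} that $A^{1}_{n} = \alpha_{n}([n-1]) = n!$, and that $\alpha_{n}(S) = \binom{n}{\co(S)}$. Taking $S = [n-1]$, the associated composition is $\co([n-1]) = (1,1,\ldots,1)$, a composition of $n$ into $n$ parts each equal to $1$. Denote this composition by $\vec{1}$. Thus $A^{1}_{n} = \binom{n}{\vec{1}}$.

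Next I would apply Kummer's theorem to conclude that the number of factors of $p$ in $\binom{n}{\vec{1}}$ equals $\carries_{p}(\vec{1})$, the number of carries that occur when adding $1 + 1 + \cdots + 1$ ($n$ times) in base $p$. The final step is to evaluate this quantity using the lemma $u_{p}(\vec{c}) = (p-1)\cdot \carries_{p}(\vec{c}) + u_{p}(n)$. For $\vec{c} = \vec{1}$, each part contributes $u_{p}(1) = 1$ to the digit sum, so $u_{p}(\vec{1}) = n$. Rearranging the lemma gives
\[
\carries_{p}(\vec{1}) = \frac{u_{p}(\vec{1}) - u_{p}(n)}{p-1} = \frac{n - u_{p}(n)}{p-1},
\]
which is the desired count.

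There is essentially no obstacle here; the argument is purely a matter of recognizing $n!$ as the multinomial coefficient indexed by the all-ones composition and then chaining Kummer with the preceding lemma. One might also remark that this recovers the classical Legendre formula for the $p$-adic valuation of $n!$, providing a consistency check.
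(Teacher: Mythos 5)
Your argument is correct and is essentially identical to the paper's own proof: both write $A^{1}_{n} = n! = \binom{n}{1,1,\ldots,1}$, invoke Kummer's theorem to reduce the count to $\carries_{p}(1,1,\ldots,1)$, and then evaluate that via the lemma $u_{p}(\vec{c}) = (p-1)\cdot\carries_{p}(\vec{c}) + u_{p}(n)$ with $u_{p}(1,1,\ldots,1)=n$. No changes needed.
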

\begin{proof}
Note that $A^{1}_{n} = n! = \binom{n}{1,1, \ldots, 1}$.
Hence by Kummer's theorem the number of factors of
$p$ is
$\carries_{p}(1,1, \ldots, 1)
=
(u_{p}(1,1, \ldots, 1) - u_{p}(n))/(p-1)$.
\end{proof}

Similarly, define the depth of $n$ 
to be $d_{p}(n) = u_{p}(n) - 1$,
that is, the sum of the digits of $n$ in base~$p$
beyond the requisite digit greater than zero in its first position.
Further, define the depth of a composition
$\vec{c}$ 
to be the sum of the depth of
each of its parts,
that is,
$d_{p}(\vec{c}) = \sum_{i=1}^{k} d_{p}(c_{i})$.
The next lemma is direct.
\begin{lemma}
For a composition $\vec{c}$ into $k$ parts,
$u_{p}(\vec{c}) = d_{p}(\vec{c}) + k$.
\end{lemma}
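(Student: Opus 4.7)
The plan is to proceed directly from the definitions, since the statement is essentially a bookkeeping observation. For each part $c_{i}$ of the composition $\vec{c} = (c_{1}, c_{2}, \ldots, c_{k})$, the definition of depth for a single integer gives $d_{p}(c_{i}) = u_{p}(c_{i}) - 1$, so that $u_{p}(c_{i}) = d_{p}(c_{i}) + 1$. I would then sum this identity over $i$ from $1$ to $k$: the left-hand side becomes $\sum_{i=1}^{k} u_{p}(c_{i})$, which by definition equals $u_{p}(\vec{c})$, while the right-hand side becomes $\sum_{i=1}^{k} d_{p}(c_{i}) + k = d_{p}(\vec{c}) + k$.

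There is no real obstacle here; the identity simply records the fact that each of the $k$ parts of the composition contributes exactly one ``requisite'' digit that is shed when passing from $u_{p}$ to $d_{p}$, and there are $k$ such parts. Hence the $+k$ term on the right.
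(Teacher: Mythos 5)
Your proof is correct and matches the paper's intent exactly: the paper states this lemma as ``direct'' with no written proof, and the argument it has in mind is precisely your termwise identity $u_{p}(c_{i}) = d_{p}(c_{i}) + 1$ summed over the $k$ parts. Nothing is missing, since every part of a composition is a positive integer, so the definition $d_{p}(c_{i}) = u_{p}(c_{i}) - 1$ applies to each part.
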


Recall according to the map $\co$ from subsets $S \subseteq [n-1]$
to compositions $\vec{c}$ of $n$ that the number of parts $k$
of $\vec{c}$ is one more than the cardinality of $S$. 
Combining this observation with the previous two lemmas yields the next result.

\begin{proposition}
For a set $S \subseteq [n-1]$ and its associated composition 
$\co(S) = \vec{c}$ of $n$,
the number of carries
$\carries_{p}(\vec{c})$ is given by
$(d_{p}(\vec{c}) + |S| - d_{p}(n))/(p-1)$.
\label{proposition_Carries}
\end{proposition}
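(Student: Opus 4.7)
The plan is a direct substitution chaining together the two lemmas immediately preceding the proposition, together with the combinatorial link between a subset $S \subseteq [n-1]$ and its associated composition $\co(S) = \vec{c}$.

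First, I would apply the identity $u_{p}(\vec{c}) = (p-1) \cdot \carries_{p}(\vec{c}) + u_{p}(n)$ from the first lemma of this section and solve for the carries:
$$\carries_{p}(\vec{c}) = \frac{u_{p}(\vec{c}) - u_{p}(n)}{p-1}.$$
Next I would rewrite both $u_{p}$ terms using the depth. By the second lemma, $u_{p}(\vec{c}) = d_{p}(\vec{c}) + k$, where $k$ is the number of parts of $\vec{c}$. By the defining observation about $\co$, the composition $\co(S)$ has exactly $|S|+1$ parts, so $k = |S|+1$. Finally, by the definition of depth, $u_{p}(n) = d_{p}(n) + 1$.

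Substituting these three identities into the displayed formula, the constants $+1$ (from $k$) and $-1$ (from $u_{p}(n)$) cancel, leaving
$$\carries_{p}(\vec{c}) = \frac{d_{p}(\vec{c}) + |S| - d_{p}(n)}{p-1},$$
which is the claimed expression.

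There is really no genuine obstacle here; the statement is a bookkeeping consequence of the two preceding lemmas and the definitions. The only thing to be careful about is the off-by-one between $|S|$ and $k$, so I would state that identification explicitly to avoid confusion, and otherwise keep the argument to a couple of lines.
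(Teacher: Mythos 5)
Your proof is correct and matches the paper's intended argument exactly: the paper states the proposition as an immediate consequence of the two preceding lemmas together with the observation that $\co(S)$ has $|S|+1$ parts, which is precisely the substitution you carry out. Nothing further is needed.
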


This gives way to the main result in this section.

\begin{theorem}
When $r$ is odd and $p$ is prime, the sum $A^{r}_{n}$ 
has at least 
$(n - 1 - r \cdot d_{p}(n))/(p-1)$
factors of $p$.
\label{theorem_odd_prime}
\end{theorem}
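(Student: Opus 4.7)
The plan is to apply the $r$ odd expansion~\eqref{equation_expansion_r_odd} from Lemma~\ref{lemma_expansion} and to show that each individual summand there is divisible by $p^{(n-1-r \cdot d_{p}(n))/(p-1)}$; once this is established, divisibility of the whole sum $A^{r}_{n}$ by the same power of $p$ follows immediately.

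First I would observe that each summand has the form $\pm \prod_{i=1}^{r} \alpha_{n}(T_{i})$ for some tuple of subsets $T_{1}, \ldots, T_{r} \subseteq [n-1]$ whose union is $[n-1]$. Since $\alpha_{n}(T_{i})$ is the multinomial coefficient $\binom{n}{\co(T_{i})}$, Kummer's Theorem~\ref{theorem_Kummer} identifies its $p$-adic valuation as $\carries_{p}(\co(T_{i}))$, which by Proposition~\ref{proposition_Carries} equals $(d_{p}(\co(T_{i})) + |T_{i}| - d_{p}(n))/(p-1)$. Summing over $i$, the product $\prod_{i=1}^{r} \alpha_{n}(T_{i})$ is therefore divisible by $p^{k}$, where
$$
k \:=\: \frac{\sum_{i=1}^{r} d_{p}(\co(T_{i})) \:+\: \sum_{i=1}^{r} |T_{i}| \:-\: r \cdot d_{p}(n)}{p-1} .
$$

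To lower-bound $k$ I would use two elementary facts: the quantity $d_{p}(\co(T_{i}))$ is nonnegative for each $i$, and the covering constraint $T_{1} \cup T_{2} \cupdots T_{r} = [n-1]$ forces $\sum_{i=1}^{r} |T_{i}| \geq n-1$. Combined, these yield $k \geq (n-1-r \cdot d_{p}(n))/(p-1)$ uniformly across all summands. The bound is vacuous when its right hand side is negative, so no separate case analysis is required.

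There is no real obstacle. The whole argument is powered by the covering condition in~\eqref{equation_expansion_r_odd}, which is precisely what converts the otherwise unruly sum of $|T_{i}|$'s into the crisp lower bound $n-1$. This is also the reason the even $r$ case behaves quite differently: the analogue of~\eqref{equation_expansion_r_odd} for even $r$ has no such covering restriction, so the same bookkeeping degenerates and a finer cancellation between terms, grouping them into orbits, becomes necessary, which is exactly the strategy pursued in Section~\ref{section_improving_the_bound}.
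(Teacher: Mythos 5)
Your proposal is correct and is essentially identical to the paper's own proof: both bound each term of the odd-$r$ expansion~\eqref{equation_expansion_r_odd} by applying Kummer's theorem via Proposition~\ref{proposition_Carries}, then use $d_{p}(\co(T_{i})) \geq 0$ together with the covering condition $\sum_{i} |T_{i}| \geq |\bigcup_{i} T_{i}| = n-1$. No differences worth noting.
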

\begin{proof}
Consider a term in equation~\eqref{equation_expansion_r_odd},
where we let $\co(T_{i}) = \vec{c}\,^{i}$.
The number of factors of $p$ in this term is given by
\begin{align*}
\sum_{i=1}^{r} \carries_{p}(\vec{c}\,^{i})
& =
\sum_{i=1}^{r} \frac{d_{p}(\vec{c}\,^{i}) + |T_{i}| - d_{p}(n)}{p-1} 
\geq
\sum_{i=1}^{r} \frac{|T_{i}| - d_{p}(n)}{p-1} 
\geq
\frac{n-1 - r\cdot d_{p}(n)}{p-1},
\end{align*}
since $d_{p}(\vec{c}\,^{i}) \geq 0$ for all $i$
and
$\sum_{i=1}^{r} |T_{i}| \geq |\bigcup_{i=1}^{r} T_{i}| = n-1$.
\end{proof}

We can say something stronger when the prime $p$ is $2$.

\begin{theorem}
The sum $A^{r}_{n}$ is divisible by $2^{n - 1 - r \cdot d_{2}(n)}$.
\label{theorem_even_prime}
\end{theorem}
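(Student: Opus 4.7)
The plan is to split on the parity of $r$ and leverage the expansion from Lemma~\ref{lemma_expansion} together with the carry count from Proposition~\ref{proposition_Carries}. When $r$ is odd, Theorem~\ref{theorem_odd_prime} applied with $p=2$ immediately gives $(n-1-r \cdot d_{2}(n))/(2-1) = n-1-r \cdot d_{2}(n)$ factors of $2$, so the real work is for even $r$.

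For even $r$, I would start from equation~\eqref{equation_expansion_r_even}, which writes $A^{r}_{n}$ as a signed sum whose summands, indexed by tuples $(T_{1},\ldots,T_{r})$ of subsets of $[n-1]$, each carry an explicit factor of $2^{n-1-|\bigcup_{i=1}^{r} T_{i}|}$ along with the product $\prod_{i=1}^{r}\alpha_{n}(T_{i})$. The strategy is to show that every such summand is individually divisible by $2^{n-1-r\cdot d_{2}(n)}$, after which the divisibility of the whole sum follows.

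To count factors of $2$ in a single summand, let $\vec{c}\,^{i} = \co(T_{i})$. Since $\alpha_{n}(T_{i}) = \binom{n}{\vec{c}\,^{i}}$, Kummer's theorem (Theorem~\ref{theorem_Kummer}) together with Proposition~\ref{proposition_Carries} for $p=2$ gives that $\alpha_{n}(T_{i})$ contains exactly $\carries_{2}(\vec{c}\,^{i}) = d_{2}(\vec{c}\,^{i}) + |T_{i}| - d_{2}(n)$ factors of $2$. Adding the contribution of the explicit power of two, the total number of factors of $2$ in the summand is at least
$$
(n-1-|\textstyle\bigcup_{i=1}^{r} T_{i}|) + \sum_{i=1}^{r}\bigl(d_{2}(\vec{c}\,^{i}) + |T_{i}| - d_{2}(n)\bigr)
= (n-1-r \cdot d_{2}(n)) + \Bigl(\sum_{i=1}^{r}|T_{i}| - |\textstyle\bigcup_{i=1}^{r} T_{i}|\Bigr) + \sum_{i=1}^{r} d_{2}(\vec{c}\,^{i}).
$$
Both bracketed quantities are non-negative (the first by inclusion, the second because each $d_{2}(\vec{c}\,^{i}) \geq 0$), yielding the desired lower bound on each term.

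The argument is essentially bookkeeping once the right ingredients are lined up; the only subtle point is that the factor $2^{n-1-|\bigcup T_{i}|}$ appearing in the even-$r$ expansion perfectly compensates for the potential overlap $\sum |T_{i}| - |\bigcup T_{i}|$, which is exactly what allows the bound to be sharper (by a factor of $p-1=1$ compared to the odd case). No case involving the vanishing condition $\bigcup T_{i} = [n-1]$ is needed here since we are in the even case, and the sign $(-1)^{\sum |T_{i}|}$ plays no role in divisibility. Thus both parities combine to give $2^{n-1-r\cdot d_{2}(n)} \mid A^{r}_{n}$.
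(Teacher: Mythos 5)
Your proposal is correct and follows essentially the same route as the paper: the odd case is delegated to Theorem~\ref{theorem_odd_prime}, and for even $r$ each term of equation~\eqref{equation_expansion_r_even} is bounded below by combining the explicit power $2^{n-1-|\bigcup_{i}T_{i}|}$ with the carry count from Kummer's theorem and Proposition~\ref{proposition_Carries}. The only difference is cosmetic bookkeeping --- you substitute the carries formula first and then group the non-negative surplus terms, while the paper first bounds $-|\bigcup_{i}T_{i}|\geq-\sum_{i}|T_{i}|$ --- so the two arguments are the same.
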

\begin{proof}
The case when $r$ is odd follows from Theorem~\ref{theorem_odd_prime}.
Now suppose $r$ is even, and consider a term in
equation~\eqref{equation_expansion_r_even}.
The number of factors of $2$ in this term is given by
\begin{align*}
n-1 - \left|\bigcup_{i=1}^{r} T_{i}\right| + \sum_{i=1}^{r} \carries_{2}(\vec{c}\,^{i})
& \geq
n-1 - \sum_{i=1}^{r} |T_{i}| + \sum_{i=1}^{r} \carries_{2}(\vec{c}\,^{i}) \\
& =
n - 1 - \sum_{i=1}^{r} |T_{i}| + 
\sum_{i=1}^{r} (d_{2}(\vec{c}\,^{i}) + |T_{i}| - d_{2}(n)) \\
& \geq
n-1 - r\cdot d_{2}(n).
\qedhere
\end{align*}
\end{proof}

Since $d_{2}(2^{k}) = 0$, we have the following corollary.
\begin{corollary}
When $n$ is a power of $2$,
then $A^{r}_{n}$ is divisible by
$2^{n-1}$.
\label{corollary_two_power}
\end{corollary}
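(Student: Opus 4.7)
The plan is to invoke Theorem~\ref{theorem_even_prime} in the special case where the digit complexity $d_{2}(n)$ vanishes. Theorem~\ref{theorem_even_prime} asserts that $A^{r}_{n}$ is divisible by $2^{n-1-r \cdot d_{2}(n)}$, so the exponent bound degenerates to the desired $n-1$ precisely when $d_{2}(n) = 0$. Thus the entire task reduces to checking this numerical condition on the base-$2$ expansion of $n$.

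To verify the hypothesis, I would recall from the definition in Section~\ref{section_number_of_prime_factors} that $d_{p}(n) = u_{p}(n) - 1$, where $u_{p}(n)$ denotes the sum of the base-$p$ digits of $n$. When $n = 2^{k}$, the base-$2$ expansion of $n$ is simply a single $1$ followed by $k$ zeros, so $u_{2}(n) = 1$ and hence $d_{2}(n) = 0$. Substituting into Theorem~\ref{theorem_even_prime} yields that $A^{r}_{n}$ is divisible by $2^{n-1-r \cdot 0} = 2^{n-1}$, which is exactly the claim.

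There is no genuine obstacle in this argument, since Theorem~\ref{theorem_even_prime} already handles both parities of $r$ uniformly, so no separate case distinction is required. The content of the corollary lies entirely in the preceding theorem; this corollary merely highlights the family of $n$ on which the theorem's bound is sharpest and, notably, independent of $r$.
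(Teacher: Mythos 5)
Your proposal is correct and matches the paper exactly: the corollary is stated immediately after Theorem~\ref{theorem_even_prime} with the one-line justification that $d_{2}(2^{k})=0$, which is precisely the substitution you carry out. Nothing further is needed.
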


In this case, we actually have equality.
\begin{proposition}
When $n$ is a power of $2$, then $2^{n-1}$
is the highest power dividing $A_{n}^{r}$.
\end{proposition}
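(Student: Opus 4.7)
The plan is to upgrade Corollary~\ref{corollary_two_power} from the bound $2^{n-1} \mid A^{r}_{n}$ to an equality of valuations by computing $A^{r}_{n}$ modulo $2^{n}$ from the expansion in Lemma~\ref{lemma_expansion}. Since $d_{2}(2^{k}) = 0$, Proposition~\ref{proposition_Carries} together with Kummer's theorem gives $v_{2}(\alpha_{n}(T_{i})) = \carries_{2}(\co(T_{i})) = d_{2}(\co(T_{i})) + |T_{i}|$, so rerunning the estimate used to prove Theorem~\ref{theorem_even_prime} shows that every term in the expansion has $2$-adic valuation at least $n-1$. Equality holds precisely for those tuples $(T_{1}, \ldots, T_{r})$ such that (a) each $\co(T_{i})$ has all parts equal to powers of $2$, (b) the $T_{i}$ are pairwise disjoint, and, when $r$ is odd, (c) $T_{1} \cupdots T_{r} = [n-1]$. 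Let $\mathcal{A}$ denote the family of subsets $T \subseteq [n-1]$ whose composition $\co(T)$ has only power-of-$2$ parts, and let $L_{r}$ be the set of these leading tuples. Each leading term has the form $\pm 2^{n-1} \cdot m$ for some odd positive integer $m$, and modulo $2^{n}$ every such quantity equals $2^{n-1}$, while the non-leading terms vanish modulo $2^{n}$. Hence $A^{r}_{n} \equiv |L_{r}| \cdot 2^{n-1} \pmod{2^{n}}$, and the proposition reduces to showing that $|L_{r}|$ is odd.

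The main subtlety is this final parity claim, which I would establish by a swap involution. For $r \geq 2$, transposing the first two coordinates of a tuple in $L_{r}$ is an involution on $L_{r}$, and any fixed point satisfies $T_{1} = T_{2}$; combined with pairwise disjointness, this forces $T_{1} = T_{2} = \emptyset$. Deleting these two empty coordinates gives a bijection between the fixed points and $L_{r-2}$, so $|L_{r}| \equiv |L_{r-2}| \pmod{2}$. Iterating this reduction leaves the base cases $r = 1$ and $r = 2$. For $r = 2$, the sole fixed point of the swap is $(\emptyset, \emptyset)$, using $\emptyset \in \mathcal{A}$ since $\co(\emptyset) = (n)$ is a single power of $2$, so $|L_{2}| \equiv 1 \pmod{2}$. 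For $r = 1$, the covering constraint forces $T_{1} = [n-1]$, which lies in $\mathcal{A}$ because $\co([n-1]) = (1, 1, \ldots, 1)$, giving $|L_{1}| = 1$. Consequently $|L_{r}|$ is odd for every $r \geq 1$, and therefore $A^{r}_{n} \equiv 2^{n-1} \pmod{2^{n}}$, showing that $2^{n-1}$ is exactly the highest power of $2$ dividing $A^{r}_{n}$.
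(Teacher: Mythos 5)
Your proof is correct and takes essentially the same approach as the paper: both identify the tuples where the valuation bound $n-1$ is attained (disjoint $T_i$ with all parts of each $\co(T_i)$ a power of $2$), observe that each such term contributes $2^{n-1}$ modulo $2^{n}$, and pair them off by a transposition involution whose unpaired contribution is a single $2^{n-1}$. The only difference is organizational: the paper uses one involution (swapping the first unequal consecutive pair $T_{2k-1}, T_{2k}$) with a unique fixed point, whereas you swap the first two coordinates and induct on $r$ to show the count of extremal tuples is odd.
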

\begin{proof}
For $x$
an $r$-tuple 
$(T_{1}, T_{2}, \ldots, T_{r})$,
let $h(x)$ denote the associated term
in 
Lemma~\ref{lemma_expansion}.
Note that the expression for $h(x)$ depends on the
parity of $r$.
Observe in the proofs of
Theorems~\ref{theorem_odd_prime}
and~\ref{theorem_even_prime}
that we have equality in the bound for those terms
where the sets $T_{i}$ are disjoint and $d_{2}(\vec{c}\,^{i})=0$
for all $i$. Note that the latter condition requires
all the parts of $\vec{c}\,^{i}$ to be powers of $2$.
Let $X$ be the collection of all such $r$-tuples.

Consider an $r$-tuple
$x=(T_{1}, T_{2}, \ldots, T_{r}) \in X$,
and choose the smallest 
$1\leq k\leq \left\lfloor{r}/{2}\right\rfloor$ such that 
$T_{2k-1} \neq T_{2k}$, if one exists.
Let $x'$ be obtained by switching the
$(2k-1)$st and $2k$th subsets,
that is,
$x'=(T_{1},\ldots,T_{2k-2}, T_{2k}, T_{2k-1},T_{2k+1},\ldots,T_{r})$.
Observe that $h(x') = h(x)$.
Since the subsets~$T_{i}$ are all disjoint, the only
case where such a $k$ does not exist is when 
$T_{1}, T_{2}, \ldots, T_{2\cdot\lfloor{r}/{2}\rfloor}$ are all empty. 
This occurs in a single $r$-tuple $x_{0}$, where 
$x_{0} = (\emptyset, \emptyset, \ldots, \emptyset)$ if $r$ is even, and
$x_{0} = (\emptyset, \emptyset, \ldots, \emptyset, [n-1])$ if $r$ is odd. 
When $r$ is even, we directly observe that
$h(x_{0}) \equiv 2^{n-1} \bmod 2^{n}$.
When $r$ is odd we have
$h(x_{0}) \equiv n! \equiv 2^{n-1} \bmod 2^{n}$,
using that $n$ is a power of $2$.
Now, after pairing up all these terms except the term $h(x_{0})$, 
the result follows by
\begin{align*}
A_{n}^{r} 
& \equiv
\sum_{x \in X} h(x)
\equiv
h(x_{0})
\equiv
2^{n-1}
\bmod 2^{n} .
\qedhere
\end{align*}
\end{proof}

\section{Improving the bound}
\label{section_improving_the_bound}

We now improve upon the bounds of
Theorems~\ref{theorem_odd_prime}
and~\ref{theorem_even_prime}.

\begin{proposition}
When $p$ is an odd prime and $n\geq 2$, 
the sum $A_{n}^{p^k}$ has at least
\[
\frac{n-1-p^k \cdot d_p(n)}{p-1} + k
\]
factors of $p$.
\label{proposition_simple_shift}
\end{proposition}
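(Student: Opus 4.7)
The plan is to group the terms appearing in equation~\eqref{equation_expansion_r_odd}, which applies since $r = p^k$ is odd (as $p$ is odd), by the orbits of the cyclic group $\mathbb{Z}_{p^k}$ acting on the set of $p^k$-tuples $(T_1, T_2, \ldots, T_{p^k})$ via cyclic rotation of coordinates. This action preserves the union condition $T_1 \cup \cdots \cup T_{p^k} = [n-1]$, the quantity $\sum_i |T_i|$, and the product $\prod_i \alpha_n(T_i)$, so the summand $h(T_1, \ldots, T_{p^k})$ is constant on each orbit. Hence
$$
A_n^{p^k} \;=\; \sum_{\text{orbits } O} |O|\cdot h(x_O),
$$
where $x_O$ is any representative of $O$.

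By orbit--stabilizer, each orbit has size $p^j$ for some $0 \leq j \leq k$, and its representatives are precisely the tuples of period $p^j$ under the shift, determined by their first $p^j$ entries $(T_1, \ldots, T_{p^j})$, which still satisfy $\bigcup_{i=1}^{p^j} T_i = [n-1]$. Since $p^{k-j}$ is odd, the contribution of such an orbit equals
$$
p^j \cdot (-1)^{n-1+\sum_{i=1}^{p^j}|T_i|} \cdot \prod_{i=1}^{p^j} \alpha_n(T_i)^{p^{k-j}}.
$$
Applying Kummer's Theorem~\ref{theorem_Kummer} together with Proposition~\ref{proposition_Carries}, and using $\sum_{i=1}^{p^j}|T_i|\geq n-1$ and $d_p(\co(T_i))\geq 0$, the $p$-adic valuation of this orbit contribution is at least
$$
j + p^{k-j}\cdot\frac{n-1-p^j\, d_p(n)}{p-1}.
$$

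The remaining step is to verify that this lower bound dominates the target $(n-1-p^k\, d_p(n))/(p-1) + k$ for every $j \in \{0, 1, \ldots, k\}$. Setting $s = k - j$, the required inequality simplifies to $(p^s - 1)(n-1) \geq s(p-1)$. For $s = 0$ this is trivial, and for $s \geq 1$ it follows from the identity $(p^s - 1)/(p-1) = 1 + p + \cdots + p^{s-1} \geq s$ together with $n-1 \geq 1$, the latter being precisely where the hypothesis $n \geq 2$ enters. The main obstacle is this arithmetic bookkeeping: it expresses the trade-off that orbits of large size (small $s$) contribute the extra powers of $p$ directly through the factor $p^j$, while orbits of small size (large $s$) contribute them through the exponent $p^{k-j}$ on each multinomial factor, and the Bernoulli-type inequality $p^s \geq 1 + s(p-1)$ shows that these two sources together always suffice to improve the bound of Theorem~\ref{theorem_odd_prime} by exactly $k$ factors of $p$.
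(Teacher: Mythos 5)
Your proposal is correct and follows essentially the same route as the paper's proof: group the terms of equation~\eqref{equation_expansion_r_odd} into orbits of the cyclic shift on $p^k$-tuples, use the orbit size $p^j$ to gain $j$ factors of $p$, bound the carries via Proposition~\ref{proposition_Carries} together with $\bigcup_{i=1}^{p^j}T_i=[n-1]$, and close with the inequality $(p^{k-j}-1)/(p-1)\geq k-j$ and $n-1\geq 1$. The only cosmetic difference is that you rewrite the periodic product as $\prod_{i=1}^{p^j}\alpha_n(T_i)^{p^{k-j}}$ before valuating, which is equivalent to the paper's computation.
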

\begin{proof}
Consider the term indexed by the $p^k$-tuple
$(T_1, T_2, \ldots, T_{p^k})$
in equation~\eqref{equation_expansion_r_odd},
and consider the action by the shift
$(T_2, T_3, \ldots, T_{p^k}, T_1)$. 
Note that the size of the orbit of this action
is $p^i$, for some  $0\leq i \leq k$.
Grouping these $p^{i}$ identical terms
gives $i$ factors of $p$. 
However, this means that our tuple
$(T_1, T_2, \ldots, T_{p^k})$
can be written as
$(T_1, T_2, \ldots, T_{p^i},
T_1, T_2, \ldots, T_{p^i},\ldots,
T_1, T_2, \ldots, T_{p^i})$
up to a cyclic shift, and further,
$\bigcup_{j=1}^{p^i} T_j = [n - 1]$.
Hence, the number of factors of $p$
in these terms is
\begin{align*}
i + \sum_{j=1}^{p^k} \carries_p(\vec{c}\,^j)
& = 
i + \sum_{j=1}^{p^k} \frac{d_p(\vec{c}\,^j) + |T_j| - d_p(n)}{p-1} \\
& \geq 
i
+
\frac{\left(\sum_{j=1}^{p^k} |T_j|\right) - p^k\cdot d_p(n)}{p-1} \\
& \geq
i + \frac{p^{k-i} \cdot (n-1) - p^k\cdot d_p(n)}{p-1} \\
& = 
i
+
\frac{(p^{k-i} - 1) \cdot (n-1)}{p-1}
+
\frac{n - 1 - p^k\cdot d_p(n)}{p-1} \\
& \geq
k + \frac{n - 1 - p^k\cdot d_p(n)}{p-1},
\end{align*}
where in the last step we used
$(p^{k-i} - 1)/(p-1) = 1 + p + \cdots + p^{k-i-1} \geq k-i$
and
$n-1 \geq 1$.
\end{proof}

The above proof uses the action of the cyclic group
${\mathbb Z}_{p^{k}}$
to collect terms together.
We can improve the bound of Proposition~\ref{proposition_simple_shift}
in some cases by using a larger group acting on the $r$-tuples.

Let $q$ be the prime power $p^{k}$. 
We define the group $G_{q}$ acting on the set $[q]$.
The generators are indexed by pairs $(a, b)$ where 
$1 \leq a \leq k$
and
$0\leq b \leq p^{k - a} - 1$.
The generator $\sigma_{a, b}$ is given 
by the following product of $p$-cycles,
\[
\sigma_{a, b} = 
\prod_{i=1}^{p^{a-1}} 
(
i + bp^{a}, 
i + bp^{a} + p^{a-1}, 
\dots, 
i + bp^{a} + (p-1) p^{a-1}
).
\] 

To give a geometric picture of the action of this group,
consider a balanced $p$-ary tree of depth~$k$. 
This tree has $q$ leaves, which we label $1$ through $q$.
Furthermore, the tree has $\frac{q-1}{p-1}$ internal nodes,
which are indexed by the pairs $(a, b)$. 
The $a$ coordinate states that the internal node is
at depth $k-a$. The $b$ coordinate indicates which
node at that depth, reading from left to right.
The generator $\sigma_{a, b}$ then cyclically shifts
the $p$ children of this node. See Figure~\ref{Tree}
for an example.
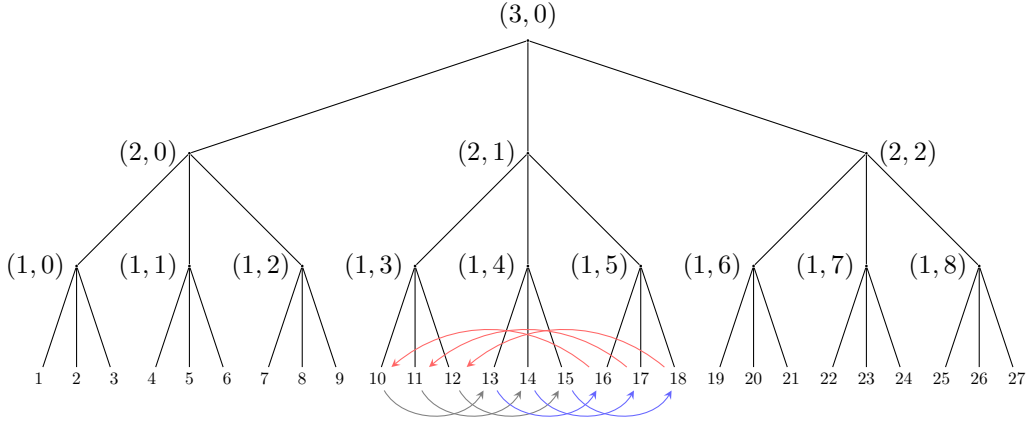
\begin{figure}
	\[\begin{tikzpicture}[scale=1, >=stealth] 
		\tikzstyle{internal node}=[circle, inner sep=0.3,fill=black]

		\tikzstyle{level 1}=[sibling distance=45mm]
		\tikzstyle{level 2}=[sibling distance=15mm]
		\tikzstyle{level 3}=[sibling distance=5mm]
		\node[internal node, label=above:{\small$(3, 0)$}](0){}
			child{node[internal node, label=left:{\small$(2, 0)$}]{}
				child{node[internal node, label=left:{\small$(1, 0)$}]{}
					child{node[scale=0.6]{${1}$}}
					child{node[scale=0.6]{${2}$}}
					child{node[scale=0.6]{${3}$}}
				}
				child{node[internal node, label=left:{\small$(1,1)$}]{}
					child{node[scale=0.6]{${4}$}}
					child{node[scale=0.6]{${5}$}}
					child{node[scale=0.6]{${6}$}}
				}
				child{node[internal node, label=left:{\small$(1, 2)$}]{}
					child{node[scale=0.6]{${7}$}}
					child{node[scale=0.6]{${8}$}}
					child{node[scale=0.6]{${9}$}}
				}
			}
			child{node[internal node, label=left:{\small$(2, 1)$}]{}
				child{node[internal node, label=left:{\small$(1, 3)$}]{}
					child{node[scale=0.6]{${10}$}}
					child{node[scale=0.6]{${11}$}}
					child{node[scale=0.6]{${12}$}}
				}
				child{node[internal node, label=left:{\small$(1, 4)$}]{}
					child{node[scale=0.6]{${13}$}}
					child{node[scale=0.6]{${14}$}}
					child{node[scale=0.6]{${15}$}}
				}
				child{node[internal node, label=left:{\small$(1, 5)$}]{}
					child{node[scale=0.6]{${16}$}}
					child{node[scale=0.6]{${17}$}}
					child{node[scale=0.6]{${18}$}}
				}
			}
			child{node[internal node, label=right:{\small$(2, 2)$}]{}
				child{node[internal node, label=left:{\small$(1, 6)$}]{}
					child{node[scale=0.6]{${19}$}}
					child{node[scale=0.6]{${20}$}}
					child{node[scale=0.6]{${21}$}}
				}
				child{node[internal node, label=left:{\small$(1, 7)$}]{}
					child{node[scale=0.6]{${22}$}}
					child{node[scale=0.6]{${23}$}}
					child{node[scale=0.6]{${24}$}}
				}
				child{node[internal node, label=left:{\small$(1, 8)$}]{}
					child{node[scale=0.6]{${25}$}}
					child{node[scale=0.6]{${26}$}}
					child{node[scale=0.6]{${27}$}}
				}
			};
			
		\draw[gray, ->, bend right=60pt] (0-2-1-1) to (0-2-2-1);
		\draw[gray, ->, bend right=60pt] (0-2-1-2) to (0-2-2-2);
		\draw[gray, ->, bend right=60pt] (0-2-1-3) to (0-2-2-3);
		
		\draw[blue!60!white, ->, bend right=60pt] (0-2-2-1) to (0-2-3-1);
		\draw[blue!60!white, ->, bend right=60pt] (0-2-2-2) to (0-2-3-2);
		\draw[blue!60!white, ->, bend right=60pt] (0-2-2-3) to (0-2-3-3);
		
		\draw[red!60!white, ->, bend right=40pt] (0-2-3-1) to (0-2-1-1);
		\draw[red!60!white, ->, bend right=40pt] (0-2-3-2) to (0-2-1-2);
		\draw[red!60!white, ->, bend right=40pt] (0-2-3-3) to (0-2-1-3);
	  \end{tikzpicture}\]
\caption{A balanced ternary tree of depth $3$ with
the action of $\sigma_{2, 1}$ shown.}
\label{Tree}
\end{figure}

With this geometric picture, it is straightforward
to observe that the group has order 
$p^{\frac{q-1}{p-1}}$. 
Given a $q$-tuple of sets 
$x=(T_{1}, T_{2}, \dots, T_{q})$,
let the group $G_{q}$ act on $x$ by
permuting the indices.
Let $\Orb_{x}$ be the orbit of the $q$-tuple $x$,
that is, $\Orb_{x} = \{g \cdot x : g \in G_{q}\}$.
Note that the cardinality of
the orbit $\Orb_{x}$ is a power of $p$.

Additionally, for an $r$-tuple
$x = (T_{1}, \ldots, T_{r})$
let
$f_{n}^{r}(x)=
(-1)^{\sum_{i=1}^{r} |T_{i}|}
\cdot
\prod_{i=1}^{r} \alpha_{n}(T_{i})$.

\begin{proposition}
Let $q=p^{k}$ and $d_{p}(n) > 0$. For a $q$-tuple $x=(T_{1}, T_{2}, \dots, T_{q})$, 
the sum
$
\displaystyle
\sum_{y\in \Orb_x} f_{n}^{q}(y)
$
has at least
\[
\frac{q-1 + \left|\bigcup_{i=1}^{q} T_{i} \right| - q\cdot d_{p}(n)}{p-1}
\]
factors of $p$.
\label{proposition_q}
\end{proposition}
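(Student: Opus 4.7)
The plan is to exploit the invariance of $f_n^q$ under the group $G_q$. Since $f_n^q(x) = (-1)^{\sum_i |T_i|} \prod_{i=1}^q \alpha_n(T_i)$ depends only on the multiset of the $T_i$, we have $f_n^q(y) = f_n^q(x)$ for every $y \in \Orb_x$, so $\sum_{y \in \Orb_x} f_n^q(y) = |\Orb_x| \cdot f_n^q(x)$. Writing $\vec{c}\,^i = \co(T_i)$, Kummer's Theorem (Theorem~\ref{theorem_Kummer}) gives that the $p$-adic valuation of $\alpha_n(T_i)$ equals $\carries_p(\vec{c}\,^i)$, and since $|\Orb_x|$ divides $|G_q| = p^{(q-1)/(p-1)}$, the $p$-adic valuation of the orbit sum equals $\log_p|\Orb_x| + \sum_{i=1}^q \carries_p(\vec{c}\,^i)$.

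Multiplying the target bound by $(p-1)$ and applying the digit-sum identity $(p-1) \carries_p(\vec{c}) = u_p(\vec{c}) - u_p(n)$, the claim becomes equivalent to
\[
(p-1) \log_p|\Orb_x| + \sum_{i=1}^q u_p(\vec{c}\,^i) \;\geq\; 2q - 1 + \left|\bigcup_{i=1}^q T_i\right|,
\]
using $u_p(n) = d_p(n) + 1 \geq 2$. I would prove this by induction on the depth $a$ of a subtree $v$ (with $p^a$ leaves) of the balanced $p$-ary tree, with the generalized statement
\[
(p-1) \log_p|\Orb_v(x_v)| + \sum_{i \text{ leaf of }v} u_p(\vec{c}\,^i) \;\geq\; 2 p^a - 1 + |U(v)|,
\]
where $x_v$ is the restricted labeling, $\Orb_v(x_v)$ is its orbit under the subtree group $G_{p^a}$, and $U(v) = \bigcup_{i \text{ leaf of }v} T_i$. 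The base case $a = 0$ is immediate because $\vec{c}\,^i$ has $|T_i| + 1$ parts, each at least $1$, so $u_p(\vec{c}\,^i) \geq |T_i| + 1$.

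In the inductive step at depth $a + 1$, I would first dispense with the sub-case in which every leaf of $v$ has $T_i = \emptyset$: the claim is then immediate from $u_p(\vec{c}\,^i) = u_p(n) \geq 2$. Otherwise, apply the induction hypothesis to each of the $p$ sub-subtrees $v_1, \ldots, v_p$ and sum; what remains reduces to
\[
(p-1)\left(\log_p|\Orb_v(x_v)| - \sum_{j=1}^p \log_p|\Orb_{v_j}(x_j)|\right) \;\geq\; (p-1) + |U(v)| - \sum_{j=1}^p |U_j|.
\]
The main obstacle is the wreath-product stabilizer analysis $G_{p^{a+1}} = G_{p^a} \wr \mathbb{Z}_p$. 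Using that $\mathbb{Z}_p$ has no proper nontrivial subgroup, the situation splits into two cases: (A) the $x_j$'s do not all lie in a single $G_{p^a}$-orbit, so only the trivial cyclic shift can stabilize and the orbit-size difference is exactly $1$, which suffices since $\sum_j |U_j| \geq |U(v)|$; (B) the $x_j$'s are all $G_{p^a}$-equivalent, so the orbit-size difference is $0$, but then $U_j = U_1$ for every $j$ (since any $G_{p^a}$-transform preserves the multiset of leaf labels), yielding $\sum_j |U_j| - |U(v)| = (p-1)|U_1| \geq p - 1$, where $|U_1| \geq 1$ follows from having excluded the all-empty sub-case.
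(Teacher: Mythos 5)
Your proof is correct and follows essentially the same route as the paper: induction on the depth of the balanced $p$-ary tree, with the key dichotomy being whether a nontrivial root rotation lies (modulo the base subgroup $G_{p^{a}}^{\,p}$) in the stabilizer, so that the orbit either gains a factor of $p$ or all $p$ children carry identical data. The only real difference is in the degenerate case, where you apply the induction hypothesis to all $p$ children and use that the unions $U_{j}$ coincide and are nonempty, whereas the paper applies it to one child and bounds the remaining leaves' carries directly via $d_{p}(\vec{c}\,^{i}) + |T_{i}| \geq 1$; both arguments invoke the hypothesis $d_{p}(n) > 0$ at exactly this point.
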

\begin{proof}
The proof is by induction on $k$.
The induction basis is $k=0$, that is, $q=1$.
Here $\Orb_{x}$ consists only of $(T)$.
The number of $p$-factors are
\begin{align*}
\carries_{p}(\vec{c}) 
&= 
\frac{d_{p}(\vec{c}) + |T| - d_{p}(n)}{p-1}
\geq
\frac{\left| T \right| - d_{p}(n)}{p-1},
\end{align*}
since $d_{p}(\vec{c})\geq 0$,
which completes the basis of the induction.

Now assume that the statement is true for all
$p$-powers strictly less than $q$.
Notice that $f_{n}(y) = f_{n}(x)$ for all $y \in \Orb_{x}$. Hence,
\begin{align*}
\sum_{y \in \Orb_{x}} f_{n}(y) 
&= 
|\Orb_{x}| \cdot f_{n}^{q}(x)
=
|\Orb_{x}| \cdot (-1)^{\sum_{i=1}^{q} |T_{i}|}
\cdot \prod_{i=1}^{q} \alpha_{n}(T_{i}).
\end{align*}
Furthermore, the number of factors of $p$ in the last expression is
\begin{align*}
\log_{p}(|\Orb_{x}|)
+
\sum_{i=1}^{q} \carries_{p}(\vec{c}\,^{i}) 
&= 
\log_{p}(|\Orb_{x}|)
+
\sum_{i=1}^{q} \frac{d_{p}(\vec{c}\,^{i}) + |T_{i}| - d_{p}(n)}{p-1} .
\end{align*}

For $0 \leq b \leq p-1$ let
$x_{b}$ denote the $q/p$-tuple
$(T_{b \cdot q/p + 1},\ldots,T_{(b+1) \cdot q/p})$,
that is, the $q/p$-tuple of sets below the
node $(k-1,b)$ in the tree.

First, assume that the stabilizer of $x$
contains an element involving the
permutation~$\sigma_{k,0}$.
That is, the stabilizer contains a rotation 
centered at the root~$(k,0)$ of the tree.
Then the leaves below
the nodes $(k-1,0)$ are the same as the leaves
below $(k-1,b)$.
Then the cardinality of the orbit
$\Orb_{x}$ is the same
as the size of the orbit
$\Orb_{x_{0}}$.
Hence we can apply the induction hypotheses
to the node $(k-1,0)$ of the tree:
\begin{align*}
\log_{p}(|\Orb_{x}|)
+
\sum_{i=1}^{q} \carries_{p}(\vec{c}\,^{i})
&= 
\log_{p}(|\Orb_{x_{0}}|)
+
\sum_{i=1}^{q/p} \carries_{p}(\vec{c}\,^{i}) 
+
\sum_{i=q/p+1}^{q} \carries_{p}(\vec{c}\,^{i})  \\
& \geq
\frac{q/p-1 + \left|\bigcup_{i=1}^{q/p} T_{i} \right| - q/p \cdot d_{p}(n)}{p-1}
+
\sum_{i=q/p+1}^{q} \frac{d_{p}(\vec{c}\,^{i}) + |T_{i}| - d_{p}(n)}{p-1}  \\
& =
\frac{q/p-1 + \left|\bigcup_{i=1}^{q} T_{i} \right| - q \cdot d_{p}(n)}{p-1}
+
\sum_{i=q/p+1}^{q} \frac{d_{p}(\vec{c}\,^{i}) + |T_{i}|}{p-1} .
\end{align*}
If $T_{i}$ is non-empty, then
$|T_{i}| \geq 1$.
If $T_{i}$ is empty, then
$\vec{c}\,^{i}$ is the composition $n$,
so
$d_{p}(\vec{c}\,^{i}) = d_{p}(n) \geq 1$
by our assumption.
In both cases we have 
$d_{p}(\vec{c}\,^{i}) + |T_{i}| \geq 1$
for all $q/p+1 \leq i \leq q$.
Thus, we can apply this inequality
\begin{align*}
\log_{p}(|\Orb_{x}|)
+
\sum_{i=1}^{q} \carries_{p}(\vec{c}\,^{i})
& \geq
\frac{q/p-1 + \left|\bigcup_{i=1}^{q} T_{i} \right| - q \cdot d_{p}(n)}{p-1}
+
\frac{q - q/p}{p-1} ,
\end{align*}
which yields the bound.

It remains to consider the case when
the stabilizer of $x$ does not contain
a rotation centered at the root~$(k,0)$.
Now the cardinality of the orbit of $x$ is given
by the product
$$
|\Orb_{x}|
=
\prod_{b=0}^{p-1}
|\Orb_{x_{b}}| .
$$
Hence we apply the induction hypotheses 
to each child of the root
\begin{align*}
\log_{p}(|\Orb_{x}|)
+
\sum_{i=1}^{q} \carries_{p}(\vec{c}\,^{i})
& =
\sum_{b=0}^{p-1}
\left(
\log_{p}(|\Orb_{x_{b}}|)
+
\sum_{i=1}^{q/p} \carries_{p}(\vec{c}\,^{b \cdot q/p + i})
\right) \\
& \geq
\sum_{b=0}^{p-1}
\frac{q/p-1 + \left|\bigcup_{i=1}^{q/p} T_{b \cdot q/p + i} \right| - q/p\cdot d_{p}(n)}
{p-1} \\
& \geq
\frac{q-p + \left|\bigcup_{i=1}^{q} T_{i} \right| - q\cdot d_{p}(n)}{p-1} ,
\end{align*}
which yields the bound.
This completes the second case and the induction.
\end{proof}

\begin{theorem}
For $r$ odd and $d_{p}(n)>0$, 
the sum $A_{n}^{r}$ contains
at least
$$
\left\lceil
\frac{r - u_{p}(r) + n - 1 - r\cdot d_{p}(n)}{p - 1}
\right\rceil
$$
factors of $p$.
\label{theorem_group_odd_p}
\end{theorem}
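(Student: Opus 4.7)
The plan is to bootstrap Proposition~\ref{proposition_q} from a single block of size $p^{k}$ to the whole $r$-tuple by chopping $[r]$ according to the base $p$ expansion of $r$. Writing $r = \sum_{j \geq 0} r_{j} \cdot p^{j}$, I would partition the index set $[r]$ into $u_{p}(r)$ consecutive blocks, using $r_{j}$ blocks of size $p^{j}$ for each $j$. On each block of size $p^{k}$ we use the corresponding copy of the group $G_{p^{k}}$ from Proposition~\ref{proposition_q}, and let $G$ be the direct product of these groups acting on $r$-tuples by permuting the indices within each block.

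Two preparatory observations are needed. First, the summand $(-1)^{n-1 + \sum_{i} |T_{i}|} \cdot \prod_{i=1}^{r} \alpha_{n}(T_{i})$ of equation~\eqref{equation_expansion_r_odd} factors multiplicatively across any partition of the indices, so it is a product of per-block factors of the form $f_{n}^{p^{k}}$. Second, the constraint $T_{1} \cup T_{2} \cupdots T_{r} = [n-1]$ is preserved by the $G$-action, so the sum in equation~\eqref{equation_expansion_r_odd} splits into a sum over $G$-orbits. Since $G$-orbits of $r$-tuples factor as products of $G_{p^{k}}$-orbits on each block, the sum of $f_{n}^{r}$ over a single $G$-orbit is the product, over blocks, of the per-block orbit sums controlled by Proposition~\ref{proposition_q}.

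Applying Proposition~\ref{proposition_q} block by block and adding the bounds gives at least
$$
\sum_{B} \frac{p^{k_{B}} - 1 + \left|\bigcup_{i \in B} T_{i}\right| - p^{k_{B}} \cdot d_{p}(n)}{p-1}
=
\frac{r - u_{p}(r) + \sum_{B} \left|\bigcup_{i \in B} T_{i}\right| - r \cdot d_{p}(n)}{p-1}
$$
factors of $p$ in each orbit's contribution, where the final identity uses $\sum_{B} p^{k_{B}} = r$ and the fact that the number of blocks is $u_{p}(r)$. Since $\bigcup_{B} \bigcup_{i \in B} T_{i} = \bigcup_{i=1}^{r} T_{i} = [n-1]$, we have $\sum_{B} |\bigcup_{i \in B} T_{i}| \geq n-1$, yielding the claimed fraction as a lower bound. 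The ceiling in the statement then comes for free since the $p$-adic valuation of $A_{n}^{r}$ is an integer.

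The main obstacle is the bookkeeping for the product structure: one must verify cleanly that partitioning $[r]$ into blocks makes $G$-orbits split as products, that the summand values factor correspondingly, and that the lower bounds from Proposition~\ref{proposition_q} add across blocks rather than interact in some uncontrolled way. None of these is individually difficult, but together they form the technical heart of the argument; once in place, the arithmetic identifying the bound with $(r - u_{p}(r) + n - 1 - r \cdot d_{p}(n))/(p-1)$ is immediate.
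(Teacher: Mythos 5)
Your proposal is correct and follows essentially the same route as the paper: decompose $[r]$ into blocks of $p$-power size according to the base~$p$ expansion of $r$, act by the product of the groups $G_{p^{k}}$, use multiplicativity of the summand to factor each orbit sum across blocks, apply Proposition~\ref{proposition_q} to each factor, and conclude with $\sum_{B}\bigl|\bigcup_{i\in B}T_{i}\bigr|\geq\bigl|\bigcup_{i=1}^{r}T_{i}\bigr|=n-1$. The only cosmetic difference is that the paper writes $r=\sum_{i=1}^{u_{p}(r)}q_{i}$ directly rather than indexing blocks by digits, and takes the ceiling for granted.
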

\begin{proof}
Let $r = \sum_{i=1}^{u_{p}(r)} q_{i}$ where $q_{i}$ is a power of $p$.
Note that a power $p^{j}$ occurs at most $p-1$ times in
this sum.
Now define the group
$G$ to be the Cartesian product
$G = \prod_{i=1}^{u_{p}(r)} G_{q_{i}}$.
Furthermore, let $G$ act on the set $[r]$
by letting the $G_{q_{i}}$ act on the interval
$[q_{1} + \cdots + q_{i-1} + 1, q_{1} + \cdots + q_{i-1} + q_{i}]$.
The action of the group $G$ can be viewed as forest consisting of
$u_{p}(r)$ trees.
Finally, let $G$ act on a $r$-tuple by acting on the indices of
the tuple.

Note that the function $f_{n}^{r}$ is multiplicative in the following
meaning.
For an $r$-tuple $x = (T_{1}, \ldots, T_{r})$
define
$x_{i}$ to be the 
$q_{i}$-tuple $(T_{q_{1} + \cdots + q_{i-1} + 1},
\ldots,
T_{q_{1} + \cdots + q_{i-1} + q_{i}})$.
Then we have
$$
f_{n}^{r}(T_{1}, \ldots, T_{r})
=
\prod_{i=1}^{u_{p}(r)} 
f_{n}^{q_{i}}(x_{i}) . $$
Now the sum over an orbit of the $r$-tuple
$x = (T_{1}, \ldots, T_{r})$
factors as
$$
\sum_{y\in \Orb_x} f_{n}^{r}(y)
=
\prod_{i=1}^{u_{p}(r)} 
\sum_{y_{i} \in \Orb_{x_{i}}} f_{n}^{q_{i}}(y_{i}) .
$$
Hence we can apply
Proposition~\ref{proposition_q}
to each factor,
and the sum over the orbit has at least
\begin{align}
&
\sum_{i=1}^{u_{p}(r)}
\frac{1}{p-1}
\cdot
\left(
q_{i} - 1
+
\left|
\bigcup_{j=q_{1}+\cdots+q_{i-1}+1}^{q_{1}+\cdots+q_{i}} T_{j}
\right|
-
q_{i} \cdot d_{p}(n)
\right)
\nonumber \\
&
\geq
\frac{1}{p - 1}
\cdot
\left(
r - u_{p}(r)
+
\left|
\bigcup_{j=1}^{r} T_{j}
\right|
-
r \cdot d_{p}(n)
\right) 
\label{equation_needed_in_the_next_proof}
\\
&
=
\frac{1}{p - 1}
\cdot
\left(
r - u_{p}(r)
+
n-1
-
r \cdot d_{p}(n)
\right) ,
\nonumber
\end{align}
where the last equality comes from
the assumption
$T_{1} \cup T_{2} \cupdots T_{r} = [n-1]$
in equation~\eqref{equation_expansion_r_odd}.
\end{proof}

Again, we can make a stronger statement when $p=2$.
\begin{theorem}
For $d_{2}(n)>0$, 
the sum $A_{n}^{r}$ contains at least
$r - u_{2}(r) + n - 1 - r\cdot d_{2}(n)$
factors of $2$.
\label{theorem_better_bound_2}
\end{theorem}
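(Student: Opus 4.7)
The plan is to adapt the proof of Theorem~\ref{theorem_group_odd_p} to $p=2$, using the extra factor $2^{n-1-|\bigcup T_i|}$ that appears in expansion~\eqref{equation_expansion_r_even} to handle the even case. When $r$ is odd, nothing new is required: setting $p=2$ in Theorem~\ref{theorem_group_odd_p} yields exactly the claimed bound, since $p-1=1$ renders the ceiling vacuous. So the work lies entirely in the even $r$ case, treated via expansion~\eqref{equation_expansion_r_even}.

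For even $r$, I would write the binary expansion $r = q_1 + \cdots + q_{u_2(r)}$ with each $q_i$ a power of $2$, form the group $G = \prod_{i} G_{q_i}$, and let $G$ act on $r$-tuples $(T_1,\ldots,T_r)$ by letting $G_{q_i}$ permute the indices inside the $i$th block, exactly as in the proof of Theorem~\ref{theorem_group_odd_p}. The crucial observation is that \emph{every} factor appearing in the summand of equation~\eqref{equation_expansion_r_even} is $G$-invariant: the sign $(-1)^{\sum|T_i|}$ and the product $\prod_i \alpha_n(T_i)$ are invariant under any reordering of the $T_i$, and the union $\bigcup_i T_i$ depends only on the multiset of the $T_i$, so the power $2^{n-1-|\bigcup_i T_i|}$ is invariant as well. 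Consequently the contribution of a $G$-orbit $\Orb_x$ to $A_n^r$ is
\[
2^{n-1-|\bigcup_i T_i|} \cdot \sum_{y \in \Orb_x} f_n^r(y).
\]

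Applying Proposition~\ref{proposition_q} to each block via the factorization of $f_n^r$, as in the proof of Theorem~\ref{theorem_group_odd_p}, bounds the number of factors of $2$ in $\sum_{y\in\Orb_x} f_n^r(y)$ from below by $r - u_2(r) + |\bigcup_j T_j| - r\cdot d_2(n)$; this is precisely inequality~\eqref{equation_needed_in_the_next_proof} specialized to $p=2$, stopped just before the step that invoked $\bigcup T_j = [n-1]$. Adding the $n-1-|\bigcup_i T_i|$ explicit factors of $2$ collapses the $|\bigcup_j T_j|$ term and leaves exactly $r - u_2(r) + n - 1 - r\cdot d_2(n)$ factors of $2$ in each orbit's contribution, and hence in $A_n^r$. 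The main point requiring care is recognizing that $|\bigcup_i T_i|$ is indeed an orbit invariant and can be pulled out of the orbit sum — this is exactly what rescues the bound when $\bigcup_i T_i$ is a proper subset of $[n-1]$; everything else is routine bookkeeping that mirrors the odd case verbatim.
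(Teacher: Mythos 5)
Your proposal is correct and follows essentially the same route as the paper: the odd case is delegated to Theorem~\ref{theorem_group_odd_p}, and for even $r$ the orbit argument is rerun on expansion~\eqref{equation_expansion_r_even}, pulling the $G$-invariant factor $2^{n-1-|\bigcup_{i} T_{i}|}$ out of the orbit sum and adding its exponent to the bound~\eqref{equation_needed_in_the_next_proof} before the union condition is invoked. The key observation you flag --- that $\bigl|\bigcup_{i} T_{i}\bigr|$ is an orbit invariant --- is exactly the point the paper's proof relies on.
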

\begin{proof}
The case where $r$ is odd follows from 
Theorem~\ref{theorem_group_odd_p}.
We retain the notation of the proof of
Theorem~\ref{theorem_group_odd_p}.
Note that in that proof, we did not use
the parity of $r$ until the very end.
Now assume that $r$ is even.
For an $r$-tuple $x = (T_{1}, \ldots, T_{r})$
define the function
$g_{n}^{r}(x)
=
2^{n-1 - |\bigcup_{i=1}^{r} T_{i}|}
\cdot f_{n}^{r}(x)$,
which is the expression
in equation~\eqref{equation_expansion_r_even}.
Hence the number of factors of
$2$ in the sum over the orbit
$$
\sum_{y\in \Orb_x} g_{n}^{r}(y)
=
2^{n-1 - |\bigcup_{i=1}^{r} T_{i}|}
\cdot
\sum_{y\in \Orb_x} f_{n}^{r}(y)
$$
is bounded from below by
the sum
of
$n-1 - \left|\bigcup_{i=1}^{r} T_{i}\right|$
and the expression~\eqref{equation_needed_in_the_next_proof}.
That is,
\begin{align*}
n-1 - \left|\bigcup_{i=1}^{r} T_{i}\right|
+
r - u_{2}(r)
+
\left|
\bigcup_{j=1}^{r} T_{j}
\right|
-
r \cdot d_{2}(n)
& =
n-1
+
r - u_{2}(r)
-
r \cdot d_{2}(n) .
\qedhere
\end{align*}
\end{proof}

Theorem~\ref{theorem_better_bound_2}
improves upon Theorem~\ref{theorem_even_prime}
by at least $1$ when $n$ is not a $2$-power.

\begin{corollary}
When $n$ is not a power of $2$
and $r \geq 2$, then $A^{r}_{n}$
has at least $n - r \cdot d_{2}(n)$ factors of~$2$.
\label{corollary_not_a_two_power}
\end{corollary}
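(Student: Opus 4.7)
The plan is to derive this corollary directly from Theorem~\ref{theorem_better_bound_2}, with only a small numerical inequality to verify at the end. First I would check that the hypothesis $d_{2}(n) > 0$ of Theorem~\ref{theorem_better_bound_2} is satisfied: since $n$ is not a power of~$2$, its binary expansion contains at least two nonzero digits, so $u_{2}(n) \geq 2$ and hence $d_{2}(n) = u_{2}(n) - 1 \geq 1$. Applying Theorem~\ref{theorem_better_bound_2} therefore yields that $A_{n}^{r}$ has at least
\[
r - u_{2}(r) + n - 1 - r \cdot d_{2}(n)
\]
factors of~$2$.

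Comparing this with the target lower bound of $n - r \cdot d_{2}(n)$, the difference is $r - u_{2}(r) - 1$, so the corollary reduces to the elementary claim that $r - u_{2}(r) \geq 1$ whenever $r \geq 2$. I would establish this in one of two ways: either by observing directly from the binary expansion $r = \sum_{j \geq 0} r_{j} \cdot 2^{j}$ that $r = \sum_{j} r_{j} \cdot 2^{j} \geq \sum_{j} r_{j} \cdot (j+1) \geq u_{2}(r) + 1$ once $r \geq 2$, or by invoking Kummer's theorem (Theorem~\ref{theorem_Kummer}): the quantity $r - u_{2}(r)$ equals the number of carries when adding $r$ copies of~$1$ in base~$2$, and since $r! \geq 2$ is even for $r \geq 2$, at least one such carry must occur.

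Combining these two inputs gives
\[
r - u_{2}(r) + n - 1 - r \cdot d_{2}(n) \geq n - r \cdot d_{2}(n),
\]
which is the desired bound. There is no real obstacle here: the content of the corollary is simply the observation that the extra $r - u_{2}(r)$ gained in Theorem~\ref{theorem_better_bound_2} exceeds $1$ as soon as $r \geq 2$, yielding a strict improvement over Theorem~\ref{theorem_even_prime} in the advertised range.
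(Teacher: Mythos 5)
Your proposal is correct and follows essentially the same route as the paper: apply Theorem~\ref{theorem_better_bound_2} and observe that $r \geq 2$ forces $r - u_{2}(r) \geq 1$, which absorbs the $-1$. The only additions are your explicit check that $d_{2}(n) > 0$ (which the paper leaves implicit) and the two elaborations of the elementary inequality, both of which are valid.
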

\begin{proof}
Note that $r \geq 2$ implies that $r > u_{2}(r)$,
that is,
$r - u_{2}(r) - 1 \geq 0$.
Hence by Theorem~\ref{theorem_better_bound_2}
we have
$r - u_{2}(r) + n - 1 - r \cdot d_{2}(n)
\geq
n - r \cdot d_{2}(n)$.
\end{proof}

\TablePisTwo

\TablePisThree

\begin{corollary}
Let $n$ satisfy the inequality $2^{k} \leq n \leq 2^{k + 1} - 1$.
Then $A_n^2$ is divisible by $2^{2^{k} - 1}$.
\end{corollary}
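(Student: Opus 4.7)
The plan is to split into two cases according to whether $n$ equals the lower endpoint $2^{k}$ or lies strictly above it, and then invoke the bounds already proved in the previous section.

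In the first case, $n = 2^{k}$, the digit sum satisfies $d_{2}(n)=0$, so Corollary~\ref{corollary_two_power} gives that $A_{n}^{r}$ is divisible by $2^{n-1}=2^{2^{k}-1}$ for every $r$, and in particular for $r=2$.

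In the second case, $2^{k} < n \leq 2^{k+1}-1$, the integer $n$ is not a power of $2$, so Corollary~\ref{corollary_not_a_two_power} applies with $r=2$ and yields at least $n-2\cdot d_{2}(n)$ factors of $2$ in $A_{n}^{2}$. Hence it suffices to verify the arithmetic inequality
$$ n - 2 \cdot d_{2}(n) \geq 2^{k}-1 . $$
To do this, write $n = 2^{k}+m$ with $1 \leq m \leq 2^{k}-1$. Since the leading bit of $n$ sits at position $k$ and the remaining bits form the binary expansion of $m$, we have $u_{2}(n) = 1 + u_{2}(m)$, so $d_{2}(n) = u_{2}(m)$. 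The required inequality then reduces to $m+1 \geq 2\cdot u_{2}(m)$, or equivalently $u_{2}(m) \leq (m+1)/2$.

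The last step is the only real piece of work, and it is elementary. Writing $s = u_{2}(m)$, the smallest positive integer with exactly $s$ ones in base $2$ is $2^{s}-1$, so $m \geq 2^{s}-1$, i.e., $m+1 \geq 2^{s}$. Combined with the trivial induction fact that $s \leq 2^{s-1}$ for all $s \geq 1$, this yields $u_{2}(m) = s \leq 2^{s-1} \leq (m+1)/2$, closing the argument. The main obstacle, modest as it is, lies in recognizing that Corollary~\ref{corollary_not_a_two_power} already gives exactly the right shape of bound, so that only this last inequality on binary digit sums needs checking.
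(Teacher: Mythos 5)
Your proof is correct and follows essentially the same route as the paper: the same case split at $n=2^{k}$, the same appeal to Corollary~\ref{corollary_two_power} and Corollary~\ref{corollary_not_a_two_power}, and the same reduction to the digit-sum inequality $m+1\geq 2\cdot u_{2}(m)$. The only (immaterial) difference is how that last elementary inequality is verified --- you use $m\geq 2^{u_{2}(m)}-1$ together with $s\leq 2^{s-1}$, while the paper sums the contribution $2^{j}-2$ over the binary digits of $m$.
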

\begin{proof}
Write $n$ as the sum $2^{k} + a$.
When $a=0$ there is nothing to prove
by Corollary~\ref{corollary_two_power}.
When $a \geq 1$ we have
$d_{2}(n) = u_{2}(a)$.
Furthermore, since for each $2$-power
$2^{j}$, where $j \geq 1$, we have
$2^{j} - 2 \cdot u_{2}(2^{j}) \geq 0$.
But for $j=0$ we have
$2^{j} - 2 \cdot u_{2}(2^{j}) = -1$.
Hence for all non-negative $a$ we have
$a - 2 \cdot u_{2}(a) \geq -1$.
Hence the bound by
Corollary~\ref{corollary_not_a_two_power}
yields
$n - 2 \cdot d_{2}(n)
=
2^{k} + a - 2 \cdot u_{2}(a) \geq 2^{k} - 1$.
\end{proof}

\section{Concluding remarks}

Some of the results in this paper are reminiscent of results in the
papers~\cite{C_E_P_R,Ehrenborg_Fox_1,Ehrenborg_Fox_2},
where there are results which depend on the binary expansion
of the parameters.
However, as the reader can see from
Tables~\ref{table_p_2} and~\ref{table_p_3},
where we present computational results
for the numbers of factors of the primes $2$ and~$3$
in $A^{r}_{n}$, a lot of work remains
in order to understand these numbers.

A final question is to understand the asymptotic behavior
of $A^{r}_{n}$ as $n$ tends to infinity. How similar is this
behavior to Stirling's formula?

\section*{Acknowledgments}

The authors thank the referee for comments on an earlier version of this paper.
This work was supported by a grant from the Simons Foundation
(\#429370, Richard~Ehrenborg).

\newcommand{\journal}[6]{{\sc #1,} #2, {\it #3} {\bf #4} (#5) #6.}
\newcommand{\book}[4]{{\sc #1,} #2, #3, #4.}

{\small

}

\bigskip

{\em R.\ Ehrenborg, A.\ Happ.
Department of Mathematics,
University of Kentucky,
Lexington, KY 40506-0027,}
{\tt richard.ehrenborg@uky.edu},
{\tt alex.happ@uky.edu}

\end{document}